\newtheorem{theorem}{Theorem}
\theoremstyle{plain}
\newtheorem{corollary}{Corollary}
\newtheorem{lemma}{Lemma}
\newtheorem{proposition}{Proposition}
\newtheorem{remark}{Remark}
\numberwithin{equation}{section}
\begin{document}
\title[ ]{A Simpler Proof of Frank and\ Lieb's Sharp Inequality on the Heisenberg Group}
\author{Fengbo Hang}
\address{Courant Institute, 251 Mercer Street, New York, NY 10012}
\email{fengbo@cims.nyu.edu}
\author{Xiaodong Wang}
\address{Department of Mathematics, Michigan State University, East Lansing, MI 48864}
\email{xwang@math.msu.edu}

\begin{abstract}
We give a simpler proof of the sharp Frank-Lieb inequality on the Heisenberg
group $\mathbb{H}^{m}$. The proof bypasses the sophisticated argument for
existence of a minimizer and is based on the study of the 2nd variation of
subcritical functionals using their fundamental techniques.

\end{abstract}
\maketitle

\section{Introduction}

In a ground-breaking work \cite{FL1}, Frank and Lieb determined the sharp
constant and extremal functions for the Folland-Stein inequality on the
Heisenberg group $\mathbb{H}^{m}$. Recall
\[
\mathbb{H}^{m}=\left\{  u=\left(  z,t\right)  :z\in\mathbb{C}^{m}%
,t\in\mathbb{R}\right\}
\]
with the group law
\[
u\cdot u^{\prime}=\left(  z,t\right)  \cdot\left(  z^{\prime},t\right)
=\left(  z+z^{\prime},t+t^{\prime}+2\operatorname{Im}z\overline{z^{\prime}%
}\right)  .
\]
The Haar measure on $\mathbb{H}^{m}$ is the standard Lebesgue measure
$du=dzdt$. For $\delta>0$ we write $\delta u=\left(  \delta z,\delta
^{2}t\right)  $ for the dilation. We denote the homogeneous norm on
$\mathbb{H}^{m}$ by
\[
\left\vert u\right\vert =\left\vert \left(  z,t\right)  \right\vert =\left(
\left\vert z\right\vert ^{4}+t^{2}\right)  ^{1/4}.
\]
Throughout the paper $Q=2m+2$. The Frank-Lieb inequality then states

\begin{theorem}
Let $0<\lambda<Q$ and $p=2Q/\left(  2Q-\lambda\right)  $. Then for any $f,g\in
L^{p}\left(  \mathbb{H}^{m}\right)  $%
\[
\left\vert \int_{\mathbb{H}^{m}\times\mathbb{H}^{m}}\frac{\overline
{f(u)}g\left(  v\right)  }{\left\vert u^{-1}v\right\vert ^{\lambda}%
}dudv\right\vert \leq\left(  \frac{2\pi^{m+1}}{m!}\right)  ^{\lambda/Q}%
\frac{m!\Gamma\left(  \left(  Q-\lambda\right)  /2\right)  }{\Gamma^{2}\left(
\left(  2Q-\lambda\right)  /4\right)  }\left\Vert f\right\Vert _{p}\left\Vert
g\right\Vert _{p},
\]
with equality if and only if%
\[
f\left(  u\right)  =cH\left(  \delta\left(  a^{-1}u\right)  \right)  ,g\left(
u\right)  =c^{\prime}H\left(  \delta\left(  a^{-1}u\right)  \right)
\]
for some $c,c^{\prime}\in\mathbb{C}$ and $a\in\mathbb{H}^{m}$ (unless
$f\equiv0$ or $g\equiv0$). Here $H$ is the function defined by%
\[
H\left(  z,t\right)  =\left(  \left(  1+\left\vert z\right\vert ^{2}\right)
^{2}+t^{2}\right)  ^{-\left(  2Q-\lambda\right)  /4}.
\]

\end{theorem}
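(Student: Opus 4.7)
The plan is to bypass Frank and Lieb's existence proof by transferring the inequality to a compact CR manifold, where a subcritical variational problem is automatically solvable, and then using a sharp second variation analysis to identify the subcritical maximizer. Passing to the critical limit will recover the sharp constant. First I would apply the Cayley transform, a CR diffeomorphism $\mathcal{C}:\mathbb{H}^{m}\to S^{2m+1}\setminus\{\ast\}$, to convert the inequality into an equivalent Hardy-Littlewood-Sobolev-type inequality on the CR sphere. Under the pullback the kernel $\left\vert u^{-1}v\right\vert^{-\lambda}$ becomes, up to a conformal factor absorbed into the test function, a multiple of $|1-\langle\xi,\bar\eta\rangle|^{-\lambda/2}$, the norm $\|\cdot\|_{L^p(\mathbb{H}^m)}$ transforms into $\|\cdot\|_{L^p(S^{2m+1})}$, and the extremal $H$ corresponds to the constant function on the sphere.

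For each $q>p:=2Q/(2Q-\lambda)$ I would then study the subcritical variational problem
\begin{equation*}
\Lambda_q=\sup\left\{\iint_{S^{2m+1}\times S^{2m+1}}\frac{F(\xi)\overline{F(\eta)}}{|1-\langle\xi,\bar\eta\rangle|^{\lambda/2}}\,d\xi\,d\eta : \|F\|_{L^q(S^{2m+1})}=1\right\}.
\end{equation*}
Because $q>p$ and the sphere is compact, the relevant embedding is compact and $\Lambda_q$ is attained by an extremal $F_q>0$ satisfying an Euler-Lagrange integral equation. The heart of the argument is then the second variation of the ratio at the constant function $F\equiv 1$: decomposing $L^{2}(S^{2m+1})$ into CR spherical harmonics and applying a Funk-Hecke computation diagonalizes the bilinear form with eigenvalues $\mu_k$ given in closed form via gamma functions, and the condition for the constant to be a local maximum reduces to $\mu_k\le(q-1)\mu_0$ for all $k\ge 1$. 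A direct computation shows that this inequality holds strictly for $k\ge 2$ and all $q\ge p$, and becomes equality for $k=1$ exactly at $q=p$, reflecting the noncompact action of the conformal group on $S^{2m+1}$.

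Combining this strict second variation bound with the Euler-Lagrange equation (for instance through a continuation argument that rules out nonconstant extremals branching from the constant when $q>p$), one concludes that $F_q$ must be the constant for every subcritical $q$, so $\Lambda_q$ equals the value computed explicitly at $F\equiv 1$. Letting $q\downarrow p$ then yields the sharp constant of the theorem by continuity. The characterization of extremals in the critical case follows from the $k=1$ equality at $q=p$: these neutral modes are exactly the infinitesimal directions of the noncompact conformal orbit, which under $\mathcal{C}^{-1}$ become the family $f(u)=cH(\delta(a^{-1}u))$ appearing in the statement. The main obstacle I anticipate is the sharp eigenvalue analysis in the second variation step --- computing the spectrum of the integral operator with kernel $|1-\langle\xi,\bar\eta\rangle|^{-\lambda/2}$ on $S^{2m+1}$ in the CR spherical harmonic basis and verifying $\mu_k\le(q-1)\mu_0$ with equality occurring in exactly the $k=1$ mode at $q=p$ --- as this numerical content is what encodes the sharp constant.
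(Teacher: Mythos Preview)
Your overall framework---Cayley transform to $S^{2m+1}$, subcritical approximation, Funk--Hecke spectral analysis---matches the paper's, but the core step has a genuine gap. You propose to compute the second variation \emph{at the constant function}, verify it is a strict local maximum for every subcritical $q>p$, and then invoke ``a continuation argument that rules out nonconstant extremals branching from the constant'' to conclude that the global maximizer $F_q$ is constant. A local bifurcation argument only controls solutions in a neighborhood of the constant branch; it cannot exclude a disconnected family of nonconstant maximizers. In fact the paper explicitly remarks that the authors \emph{could not} prove $F_q$ is constant for subcritical $q$ and regard it as open, so your proposed step is precisely the one that does not go through.

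The paper circumvents this as follows. Rather than analyze the second variation at the constant, it works at the (unknown) maximizer $u=u_p$ itself. The key new ingredient you are missing is a moment--zero identity: because the subcritical functional is \emph{not} invariant under the CR automorphism group, a Kazdan--Warner type computation (differentiating along the one--parameter family $u_{t,\eta}=u\circ\Phi_{t,\eta}\,\phi_{t,\eta}^{(m+1)/p}$) forces $\int_{S^{2m+1}} u^{p}\,\xi\,d\sigma=0$ automatically. This legitimizes the test functions $f=u\,\mathrm{Re}\,\xi_i$ and $f=u\,\mathrm{Im}\,\xi_i$ in the second--variation inequality at $u$. Summing and applying the Frank--Lieb spectral estimate (their Theorem~5.1, in the strengthened form with a remainder) yields
\[
\Bigl(p-\tfrac{2Q}{2Q-\lambda}\Bigr)\int u^{p}\;\ge\;C\,\langle I_\lambda(u-a),\,u-a\rangle,
\]
so one only obtains $\langle I_\lambda(u_p-a_p),u_p-a_p\rangle\to 0$ as $p\searrow 2Q/(2Q-\lambda)$, which is enough to force $u_p\to\mathrm{const}$ in $L^{2Q/(2Q-\lambda)}$ and hence gives the sharp constant. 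Note also that this argument, like the paper's, yields the sharp inequality but not by itself the full classification of extremals; your sketch of the equality case via the $k=1$ neutral mode is only heuristic.
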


Via the Cayley transform, Theorem 1 is equivalent to the following formulation
on the sphere $\mathbb{S}^{2m+1}=\left\{  z\in\mathbb{C}^{m+1}:\left\vert
z\right\vert =1\right\}  $.

\begin{theorem}
Let $0<\lambda<Q$ and $p=2Q/\left(  2Q-\lambda\right)  $. Then for any $f,g\in
L^{p}\left(  \mathbb{S}^{2m+1}\right)  $%
\[
\left\vert \int_{\mathbb{S}^{2m+1}\times\mathbb{S}^{2m+1}}\frac{\overline
{f(\xi)}g\left(  \eta\right)  }{\left\vert 1-\xi\cdot\overline{\eta
}\right\vert ^{\lambda/2}}d\sigma\left(  \xi\right)  d\sigma\left(
\eta\right)  \right\vert \leq\left(  \frac{2\pi^{m+1}}{m!}\right)
^{\lambda/Q}\frac{m!\Gamma\left(  \left(  Q-\lambda\right)  /2\right)
}{\Gamma^{2}\left(  \left(  2Q-\lambda\right)  /4\right)  }\left\Vert
f\right\Vert _{p}\left\Vert g\right\Vert _{p},
\]
with equality if and only if%
\[
f\left(  \xi\right)  =\frac{c}{\left\vert 1-\xi\cdot\overline{\eta}\right\vert
^{\left(  2Q-\lambda\right)  /2}},g\left(  \xi\right)  =\frac{c^{\prime}%
}{\left\vert 1-\xi\cdot\overline{\eta}\right\vert ^{\left(  2Q-\lambda\right)
/2}}%
\]
for some $c,c^{\prime}\in\mathbb{C}$ and $\eta\in\mathbb{C}^{m+1}$ with
$\left\vert \eta\right\vert <1$ (unless $f\equiv0$ or $g\equiv0$).
\end{theorem}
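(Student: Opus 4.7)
The plan is to deduce Theorem~2 from Theorem~1 by pulling the sphere inequality back to $\mathbb{H}^{m}$ via the Cayley transform
\[
\mathcal{C}:\mathbb{H}^{m}\longrightarrow\mathbb{S}^{2m+1}\setminus\{(\mathbf{0},-1)\},\qquad
\mathcal{C}(z,t)=\frac{1}{1+|z|^{2}-it}\bigl(2z,\,1-|z|^{2}+it\bigr).
\]
Setting $J(u):=(1+|z|^{2})^{2}+t^{2}$, two computations drive the argument. A direct expansion using the product law on $\mathbb{H}^{m}$ yields the chord identity
\[
|1-\mathcal{C}(u)\cdot\overline{\mathcal{C}(v)}|=\frac{2\,|u^{-1}v|^{2}}{\sqrt{J(u)J(v)}},
\]
and computing the Jacobian of $\mathcal{C}$ gives the measure pullback $d\sigma(\mathcal{C}(u))=c_{m}\,J(u)^{-Q/2}\,du$ for an explicit positive constant $c_{m}$ depending only on $m$.

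Next, I would introduce the conformal correspondence $F(u):=f(\mathcal{C}(u))\,J(u)^{-(2Q-\lambda)/4}$, and $G$ analogously from $g$. The critical exponent identity $p(2Q-\lambda)/4=Q/2$ makes the weight in $|F|^{p}$ cancel the Jacobian $J^{-Q/2}$ exactly, so $\|f\|_{L^{p}(\mathbb{S}^{2m+1})}^{p}=c_{m}\|F\|_{L^{p}(\mathbb{H}^{m})}^{p}$. Combining the chord identity with the Jacobian, the sphere double integral on the left of Theorem~2 transforms, up to an explicit constant involving $c_{m}$ and $2^{\lambda/2}$, into the Heisenberg double integral with kernel $|u^{-1}v|^{-\lambda}$ of Theorem~1. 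Applying Theorem~1 to $(F,G)$ and tracking all constants then delivers the sphere inequality with the stated sharp constant.

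For the equality case, the model Heisenberg extremal $H(u)=J(u)^{-(2Q-\lambda)/4}$ pulls back to the constant function $f\equiv 1$ on $\mathbb{S}^{2m+1}$, realizing the $\eta=0$ case of the claimed family. To obtain the full family, I would invoke CR invariance: the unitary rotations in $U(m+1)$ preserve both sides of Theorem~2, and under $\mathcal{C}$ the left translations $u\mapsto a\cdot u$ and dilations $u\mapsto\delta u$ of $\mathbb{H}^{m}$ correspond to the CR automorphisms of $\mathbb{S}^{2m+1}$ fixing the pole $(\mathbf{0},-1)$; together they act transitively on $\{\eta\in\mathbb{C}^{m+1}:|\eta|<1\}$, and acting on the constant function produces exactly $c\,|1-\xi\cdot\overline{\eta}|^{-(2Q-\lambda)/2}$. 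The main obstacle I anticipate is bookkeeping rather than conceptual: verifying that the factors of $c_{m}$, $2^{\lambda/2}$ and the sphere volume combine to reproduce the displayed sharp constant exactly, and writing out the orbit map to confirm surjectivity onto the open unit ball in the claimed form.
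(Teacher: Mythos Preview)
Your Cayley-transform reduction is correct as far as it goes: the chord identity, the Jacobian computation, and the correspondence $F(u)=f(\mathcal{C}(u))J(u)^{-(2Q-\lambda)/4}$ do establish that Theorems~1 and~2 are equivalent, and the paper itself states this equivalence. But this is not the paper's proof of Theorem~2; in fact you have the direction of dependence reversed. The paper's entire purpose is to give a \emph{new} proof of the Frank--Lieb inequality that avoids the delicate existence argument in \cite{FL1}, and it does so by working directly on the sphere: it considers the subcritical problem $\Lambda_p=\sup\{\|I_\lambda f\|_{p'}/\|f\|_p\}$ for $p>2Q/(2Q-\lambda)$, uses compactness (Proposition~\ref{compact}) to obtain a maximizer $u_p$, shows via a symmetry-breaking argument (Lemma~\ref{m0}) that $u_p$ automatically satisfies the moment-zero condition $\int u_p^p\,\xi\,d\sigma=0$, feeds the coordinate functions $u_p x_i,\,u_p y_i$ into the second-variation inequality, invokes the Funk--Hecke estimate (Theorem~\ref{r2v}), and concludes that $u_p\to\mathrm{const}$ in $L^{2Q/(2Q-\lambda)}$ as $p\searrow 2Q/(2Q-\lambda)$. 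Theorem~1 on $\mathbb{H}^m$ then follows \emph{from} the sphere result, not the other way around.

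So your proposal is not wrong, but it takes Theorem~1 as a black box, which in this paper's logic is the thing to be proved. If you intend to reproduce the paper's argument, you need the subcritical approximation scheme on $\mathbb{S}^{2m+1}$ rather than the Cayley pullback. Note also that the paper's Section~4 proves only the sharp inequality (Theorem~\ref{flv}); the full characterization of extremals stated in Theorem~2 is not re-derived there, whereas your approach would inherit it from the equality case of Theorem~1.
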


Their proof consists of two major steps. The first step is to prove that the
infimum
\[
\inf\left\{  \left\vert \int_{\mathbb{H}^{m}\times\mathbb{H}^{m}}%
\frac{\overline{f(u)}g\left(  v\right)  }{\left\vert u^{-1}v\right\vert
^{\lambda}}dudv\right\vert :\left\Vert f\right\Vert _{p}=\left\Vert
g\right\Vert _{p}=1\right\}
\]
is achieved by some $\left(  f,g\right)  $ and moreover $f=g$. This requires
some sophisticated harmonic analysis on $\mathbb{H}^{m}$. The 2nd step is to
work on $\mathbb{S}^{2m+1}$ and determine the extremal function $f=g$. By
using the invariance of the problem under the CR automorphism group and a
Hersch-type argument, they first arrange that $f$ satisfies a moment zero
condition. Then by exploiting masterfully the 2nd variation of the functional
with test functions provided by the moment zero condition, they prove that
such $f$ must be constant.

In this paper, we present a shorter proof of the Frank-Lieb inequality which
bypasses the subtle proof of existence and the Hersch-type argument. We use a
scheme of subcritical approximation. The starting point is that the operator%
\[
I_{\lambda}f\left(  \xi\right)  =\int_{\mathbb{S}^{2m+1}}\frac{f\left(
\eta\right)  }{\left\vert 1-\xi\cdot\overline{\eta}\right\vert ^{\lambda/2}%
}d\sigma\left(  \eta\right)  .
\]
is compact from $L^{p}\left(  \mathbb{S}^{2m+1}\right)  $ to $L^{p^{\prime}%
}\left(  \mathbb{S}^{2m+1}\right)  $, if $p>2Q/\left(  2Q-\lambda\right)  $
and $p^{\prime}=p/\left(  p-1\right)  $. Therefore the minimization problem
\[
\Lambda_{p}=\inf\left\{  \left\Vert I_{\lambda}f\right\Vert _{p^{\prime}%
}:\left\Vert f\right\Vert _{p}=1\right\}
\]
has a minimizer $u_{p}$ which can be taken to be nonnegative. Moreover, due to
a symmetry-breaking, $u_{p}$ automatically satisfies a moment zero condition.
Therefore we can analyze the 2nd variation of the functional $\left\Vert
I_{\lambda}f\right\Vert _{p^{\prime}}/\left\Vert f\right\Vert _{p}$ at $u_{p}$
by fully using Frank and Lieb's techniques. Though we could not prove that
$u_{p}$ is constant as we had expected, we are able to show that $u_{p}$
converges to a constant in $L^{2Q/\left(  2Q-\lambda\right)  }\left(
\mathbb{S}^{2m+1}\right)  $ as $p\searrow2Q/\left(  2Q-\lambda\right)  $. This
is enough to yield Frank-Lieb's sharp inequality.

The paper is organized as follows. In Section 2, we present a proof for
Jerison-Lee's sharp Sobolev inequality which is equivalent to Frank-Lieb's
inequality with $\lambda=Q-2$. The analysis is simpler in this special case.
In Section 3, we collect some fundamental results on the operator $I_{\lambda
}$. We present the proof for Frank-Lieb's inequality in Section 4. Finally, we
make some further remarks and raise several open problems in the last Section.

\textbf{Acknowledgment. }We would like to thank Rupert Frank for useful suggestions.

\section{Proof of Jerison-Lee inequality}

The Frank-Lieb inequality for $\lambda=Q-2$ is equivalent to the following
sharp Sobolev inequality established by Jerison and Lee in 1990's%

\begin{equation}
\int_{\mathbb{S}^{2m+1}}\left(  \left\vert \nabla_{b}f\right\vert ^{2}%
+\frac{m^{2}}{4}f^{2}\right)  d\sigma\geq\frac{\pi m^{2}}{4}\left(  \frac
{2}{m!}\right)  ^{\frac{1}{m+1}}\left(  \int_{\mathbb{S}^{2m+1}}\left\vert
f\right\vert ^{2\left(  m+1\right)  /m}d\sigma\right)  ^{\frac{m}{m+1}}.
\label{JLE}%
\end{equation}
Here we use the canonical pseudohermitian structure $\theta_{c}=\left(
2\sqrt{-1}\overline{\partial}\left\vert z\right\vert ^{2}\right)
|_{\mathbb{S}^{2m+1}}$ on $\mathbb{S}^{2m+1}$ with Webster scalar curvature
$R=m\left(  m+1\right)  /2$ and the adapted Riemannian metric $4g_{0}$, where
$g_{0}$ is the standard metric on $\mathbb{S}^{2m+1}$. (But to be consistent
with the general case, we still use the standard measure $d\sigma$ which
differs from the usual pseudohermitian volume form $\theta_{c}\wedge\left(
d\theta_{c}\right)  ^{m}$ by a scaling constant.) To prove this fundamental
result, Jerison and Lee \cite{JL1} first proved that the sharp constant%
\begin{equation}
\Lambda=\inf\left\{  E\left(  f\right)  :\left\Vert f\right\Vert _{2\left(
m+1\right)  /m}=1\right\}  , \label{CRY}%
\end{equation}
where $E\left(  f\right)  =\int_{\mathbb{S}^{2m+1}}\left(  \left\vert
\nabla_{b}f\right\vert ^{2}+\frac{m^{2}}{4}f^{2}\right)  d\sigma$, is achieved
by a smooth and positive minimizer $u$ satisfying the following PDE%
\begin{equation}
-\Delta_{b}u+\frac{m^{2}}{4}u=\Lambda u^{\left(  m+2\right)  /m}\text{ on
}\mathbb{S}^{2m+1}. \label{pde}%
\end{equation}
This was achieved by a blow-up analysis which we outline. For any $q<\left(
m+2\right)  /m$, the embedding from $S^{1,2}\left(  \mathbb{S}^{2m+1}\right)
$ to $L^{q+1}\left(  \mathbb{S}^{2m+1}\right)  $ is compact and therefore the
minimization problem
\[
\Lambda_{q}=\inf\left\{  E\left(  f\right)  :\left\Vert f\right\Vert
_{q+1}=1\right\}
\]
has a smooth and positive solution $u_{q}$ which then satisfies the PDE%
\begin{equation}
-\Delta_{b}u+\frac{m^{2}}{4}u=\Lambda_{q}u^{q}\text{ on }\mathbb{S}^{2m+1}.
\label{pdeq}%
\end{equation}
and $\left\Vert u_{q}\right\Vert _{q+1}=1$. If $\left\{  u_{q}\right\}  $ does
not blow up as $q\nearrow\left(  m+2\right)  /m$, the limit is a solution for
(\ref{CRY}). If it does blow up, then one can properly scale and extract a
limit on $\mathbb{H}^{m}$. As $\mathbb{H}^{m}$ is equivalent to $\mathbb{S}%
^{2m+1}$, this limit also yields a solution to (\ref{CRY}). In \cite{JL2} they
proved that any positive solution of the PDE (\ref{pde}) must be of the form%
\[
u\left(  \xi\right)  =c\left\vert \cosh t+\left(  \sinh t\right)  \xi
\cdot\overline{\eta}\right\vert ^{-2}%
\]
for some $t\geq0$ and $\eta\in\mathbb{S}^{2m+1}$. The inequality (\ref{JLE})
then follows.

In Section 3 of \cite{FL1}, assuming existence of a minimizer for (\ref{CRY})
Frank and Lieb presented a simpler proof for the classification of all
extremal functions. We will apply their idea to show directly that $\left\{
u_{q}\right\}  $ converges to a constant as $q\nearrow\left(  m+2\right)  /m$.
Therefore constant functions are minimizers for (\ref{CRY}) and hence the
sharp inequality(\ref{JLE}).

For $\eta\in\mathbb{S}^{2m+1}$ and $t\geq0$ we define $\Phi_{t,\eta
}:\mathbb{S}^{2m+1}\rightarrow\mathbb{S}^{2m+1}$ by%
\[
\Phi_{t,\eta}\left(  \xi\right)  =\frac{1}{\cosh t+\sinh t\xi\cdot
\overline{\eta}}\left(  \xi-\left(  \xi\cdot\overline{\eta}\right)
\eta\right)  +\frac{\sinh t+\cosh t\xi\cdot\overline{\eta}}{\cosh t+\sinh
t\xi\cdot\overline{\eta}}\eta.
\]
This is a one-parameter family of CR automorphisms with $\Phi_{0,\eta}=Id$.
Moreover, $\Phi_{t,\eta}^{\ast}\theta_{c}=\phi_{t,\eta}\theta_{c}$, where%
\[
\phi_{t,\eta}\left(  \xi\right)  =\frac{1}{\left\vert \cosh t+\left(  \xi
\cdot\overline{\eta}\right)  \sinh t\right\vert ^{2}}.
\]
(For more details we refer to the appendices of \cite{FL1}.) Given a function
$f$, we get a family $f_{t,\eta}$ defined by%
\[
f_{t,\eta}\left(  \xi\right)  =f\circ\Phi_{t,\eta}\left(  \xi\right)
\phi_{t,\eta}^{m/2}\left(  \xi\right)  .
\]
It is well known that the CR Yamabe functional is invariant under such
transformations, more specifically
\[
E\left(  f_{t,\eta}\right)  =E\left(  f\right)  ,\left\Vert f_{t,\eta
}\right\Vert _{2\left(  m+1\right)  /m}=\left\Vert f\right\Vert _{2\left(
m+1\right)  /m}.
\]
With this invariance, we can establish a CR analogue of the Kazdan-Warner
identity \cite{KW}.

\begin{proposition}
\label{kwi} If a positive function $f\in C^{\infty}\left(  \mathbb{S}%
^{2m+1}\right)  $ satisfies the following PDE%
\begin{equation}
-\Delta_{b}f+\frac{m^{2}}{4}f=Kf^{\left(  m+2\right)  /m}, \label{kpde}%
\end{equation}
where $K$ is a smooth function on $\mathbb{S}^{2m+1}$, then
\begin{equation}
\int_{\mathbb{S}^{2m+1}}\left[  \left\langle \nabla K,\nabla\psi_{\eta
}\right\rangle _{0}+TKT\psi_{\eta}\right]  f^{2\left(  m+1\right)  /m}%
d\sigma=0, \label{KW}%
\end{equation}
where
\[
\psi_{\eta}\left(  \xi\right)  =\operatorname{Re}\left(  \xi\cdot
\overline{\eta}\right)  ,T=J\xi
\]
and $\left\langle \cdot,\cdot\right\rangle _{0}$ stands for the standard
metric on $\mathbb{S}^{2m+1}$.
\end{proposition}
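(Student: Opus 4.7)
My plan is to run the CR analogue of the Kazdan--Warner argument. The PDE (\ref{kpde}) is the Euler--Lagrange equation of
\[
J(g) = E(g) - \frac{m}{m+1}\int_{\mathbb{S}^{2m+1}} K g^{2(m+1)/m}\,d\sigma,
\]
so $f$ is a critical point of $J$. Pulling back along the conformal variation $t\mapsto f_{t,\eta}$ gives $\frac{d}{dt}\big|_{t=0} J(f_{t,\eta}) = 0$; since $E(f_{t,\eta}) = E(f)$ is constant in $t$ (the CR invariance of the Yamabe functional already recorded before the proposition), this collapses to
\[
\frac{d}{dt}\bigg|_{t=0}\int_{\mathbb{S}^{2m+1}} K f_{t,\eta}^{2(m+1)/m}\,d\sigma = 0.
\]

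To turn this into an identity involving only $K$ and $f$, I would next transfer the $t$-dependence from $f_{t,\eta}$ over to $K$ by a change of variables. From $\Phi_{t,\eta}^{\ast}\theta_c = \phi_{t,\eta}\theta_c$ and $d\sigma\propto\theta_c\wedge(d\theta_c)^m$ one checks $\Phi_{t,\eta}^{\ast}\,d\sigma = \phi_{t,\eta}^{m+1}d\sigma$; the exponent $m/2$ in the definition of $f_{t,\eta}$ is tuned exactly so that
\[
\int K f_{t,\eta}^{2(m+1)/m}\,d\sigma = \int (K\circ \Phi_{-t,\eta})\, f^{2(m+1)/m}\,d\sigma
\]
(using $\Phi_{t,\eta}^{-1} = \Phi_{-t,\eta}$). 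Differentiating at $t=0$ then produces
\[
\int_{\mathbb{S}^{2m+1}} VK \cdot f^{2(m+1)/m}\,d\sigma = 0,\qquad V := \frac{d}{dt}\bigg|_{t=0}\Phi_{t,\eta}.
\]

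All that remains is to identify $VK$ with the integrand of (\ref{KW}). Differentiating the explicit formula for $\Phi_{t,\eta}$ gives $V(\xi) = \eta - (\xi\cdot\overline{\eta})\xi$, a complex vector at $\xi$. Writing $\xi = x+iy$, $\eta = a+ib$ and separating $\xi\cdot\overline{\eta}$ into $\psi_{\eta} + i\,\operatorname{Im}(\xi\cdot\overline{\eta})$, one checks that $V$ is tangent to $\mathbb{S}^{2m+1}$ and splits in the standard metric as
\[
V = \nabla_{0}\psi_{\eta} + (T\psi_{\eta})\,T,
\]
where $\nabla_{0}\psi_{\eta}$ is the tangential gradient of $\psi_\eta$ and $T\psi_{\eta} = -\operatorname{Im}(\xi\cdot\overline{\eta})$. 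Contracting with $\nabla K$ and using the unit length of $T$ in $g_{0}$ gives $VK = \langle \nabla K, \nabla \psi_{\eta}\rangle_{0} + TK\cdot T\psi_{\eta}$, which is (\ref{KW}). The only mildly delicate point is precisely this final decomposition of $V$ into horizontal and Reeb parts; the earlier two paragraphs are essentially formal once the conformal weights $m/2$ and $m+1$ are correctly aligned.
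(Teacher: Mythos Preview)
Your argument is correct and follows essentially the same route as the paper's proof: exploit that $E(f_{t,\eta})$ is constant in $t$, push the $t$-dependence onto $K$ via the change of variables $\Phi_{t,\eta}^{\ast}d\sigma=\phi_{t,\eta}^{m+1}d\sigma$, and then identify the generating vector field $\frac{d}{dt}\big|_{t=0}\Phi_{t,\eta}$ as $\nabla\psi_\eta+(T\psi_\eta)T$. The only cosmetic difference is that you take $f$ as a critical point of the difference functional $J(g)=E(g)-\frac{m}{m+1}\int Kg^{2(m+1)/m}$, whereas the paper uses the scale-invariant quotient $\mathcal{F}(u)=E(u)/\bigl(\int Ku^{2(m+1)/m}\bigr)^{m/(m+1)}$; both yield the same conclusion once $E$ drops out.
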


\begin{proof}
As $f$ satisfies the equation (\ref{kpde}), it is a critical point of the
functional
\[
\mathcal{F}\left(  u\right)  =E\left(  u\right)  /\left(  \int_{\mathbb{S}%
^{2m+1}}Ku^{2\left(  m+1\right)  /m}d\sigma\right)  ^{m/\left(  m+1\right)
}.
\]
Thus%
\[
\frac{d}{dt}|_{t=0}\mathcal{F}\left(  f_{t,\eta}\right)  =0.
\]
But $E\left(  f_{t,\eta}\right)  =E\left(  f\right)  $ and, by a change of
variables,
\begin{align*}
\int_{\mathbb{S}^{2m+1}}Kf_{t,\eta}^{2\left(  m+1\right)  /m}d\sigma &
=\int_{\mathbb{S}^{2m+1}}K\left(  f\circ\Phi_{t,\eta}\right)  ^{2\left(
m+1\right)  /m}\phi_{t,\eta}^{m+1}d\sigma\\
&  =\int_{\mathbb{S}^{2m+1}}K\circ\Phi_{t,\eta}^{-1}f^{2\left(  m+1\right)
/m}d\sigma.
\end{align*}
Therefore we obtain%
\[
\int_{\mathbb{S}^{2m+1}}\left\langle \nabla K,X\right\rangle _{0}f^{2\left(
m+1\right)  /m}d\sigma=0,
\]
with $X=\frac{d}{dt}|_{t=0}\Phi_{t,\eta}$. Direct calculation yields%
\[
X=\nabla\psi_{\eta}+\left(  T\psi_{\eta}\right)  T
\]
and hence the formula (\ref{KW})
\end{proof}

\begin{corollary}
\label{mzero}If $u>0$ satisfies (\ref{pdeq}) with $1<q<\left(  m+2\right)
/m$, then
\[
\int_{\mathbb{S}^{2m+1}}u\left(  \xi\right)  ^{q+1}\xi d\sigma\left(
\xi\right)  =0.
\]

\end{corollary}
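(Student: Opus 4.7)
The plan is to reduce (\ref{pdeq}) to the critical form (\ref{kpde}) by absorbing the subcritical power into the coefficient $K$. Since $u>0$ is smooth, setting $K=\Lambda_q u^{q-(m+2)/m}$ gives a smooth function on $\mathbb{S}^{2m+1}$ such that (\ref{pdeq}) reads $-\Delta_b u+\frac{m^2}{4}u=K u^{(m+2)/m}$, and Proposition \ref{kwi} applies with $f=u$. The point of this substitution is that in (\ref{KW}) the factors $\nabla K$ and $TK$ combine with the weight $u^{2(m+1)/m}$ to reproduce $\nabla u^{q+1}$ and $Tu^{q+1}$, up to the nonzero multiplicative constant $\Lambda_q(q-(m+2)/m)/(q+1)$. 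After dividing out this constant, (\ref{KW}) becomes
\[
\int_{\mathbb{S}^{2m+1}}\bigl[\langle\nabla u^{q+1},\nabla\psi_\eta\rangle_0+Tu^{q+1}\cdot T\psi_\eta\bigr]\,d\sigma=0.
\]

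Next I would integrate by parts and use two elementary spectral facts for $\psi_\eta(\xi)=\mathrm{Re}(\xi\cdot\overline\eta)$. First, $\psi_\eta$ is a first-degree spherical harmonic on $\mathbb{S}^{2m+1}\subset\mathbb{R}^{2m+2}$, so $\Delta_0\psi_\eta=-(2m+1)\psi_\eta$. Second, $T=J\xi$ generates the Hopf action $\xi\mapsto e^{is}\xi$, hence is Killing (and therefore divergence-free) for $g_0$, and a one-line computation along its orbit gives $T\psi_\eta=-\mathrm{Im}(\xi\cdot\overline\eta)$ and $T^2\psi_\eta=-\psi_\eta$. Substituting, the displayed integral collapses to $(2m+2)\int u^{q+1}\psi_\eta\,d\sigma=0$, which forces $\int_{\mathbb{S}^{2m+1}}u^{q+1}\,\mathrm{Re}(\xi\cdot\overline\eta)\,d\sigma=0$ for every $\eta\in\mathbb{C}^{m+1}$.

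To finish I would exploit $\mathbb{C}$-linearity in $\eta$: replacing $\eta$ by $i\eta$ converts $\mathrm{Re}(\xi\cdot\overline\eta)$ into $\mathrm{Im}(\xi\cdot\overline\eta)$, so both real and imaginary parts of $\int u^{q+1}(\xi\cdot\overline\eta)\,d\sigma$ vanish for every $\eta\in\mathbb{C}^{m+1}$, which is equivalent to the desired vector identity $\int_{\mathbb{S}^{2m+1}}u^{q+1}\xi\,d\sigma=0$.

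I do not anticipate any genuine obstacle here; the scheme is essentially mechanical once Proposition \ref{kwi} is in hand. The substantive point is that $\beta:=q-(m+2)/m$ is nonzero precisely in the subcritical range $1<q<(m+2)/m$, so the multiplicative constant extracted from (\ref{KW}) does not kill the identity. At the critical exponent the same argument produces the tautology $0=0$, reflecting the CR-invariance of the critical functional and showing why a Hersch-type device is needed there but not here.
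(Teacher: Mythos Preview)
Your proposal is correct and follows essentially the same route as the paper: both absorb the subcritical power into $K=\Lambda_q u^{q-(m+2)/m}$, apply Proposition~\ref{kwi}, rewrite the integrand in terms of $u^{q+1}$, and integrate by parts using $\Delta_0\psi_\eta+T^2\psi_\eta=-2(m+1)\psi_\eta$. Your version is a bit more explicit about the spectral facts for $\psi_\eta$ and about the passage from $\int u^{q+1}\psi_\eta\,d\sigma=0$ for all $\eta$ to the vector identity, but there is no substantive difference in strategy.
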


\begin{proof}
The equation (\ref{pdeq}) can be written as%
\[
-\Delta_{b}u+\frac{m^{2}}{4}u=Ku^{\left(  m+2\right)  /m},
\]
with $K=\Lambda_{q}u^{q-\left(  m+2\right)  /m}$. By Proposition \ref{kwi} and
integration by parts%
\begin{align*}
0  &  =\int_{\mathbb{S}^{2m+1}}\left[  \left\langle \nabla K,\nabla\psi_{\eta
}\right\rangle _{0}+TKT\psi_{\eta}\right]  u^{2\left(  m+1\right)  /m}%
d\sigma\\
&  =\left(  q-\frac{m+2}{m}\right)  \Lambda_{q}\int_{\mathbb{S}^{2m+1}}\left[
\left\langle \nabla u,\nabla\psi_{\eta}\right\rangle _{0}+TuT\psi_{\eta
}\right]  u^{q}d\sigma\\
&  =\frac{1}{q+1}\left(  q-\frac{m+2}{m}\right)  \Lambda_{q}\int
_{\mathbb{S}^{2m+1}}\left[  \left\langle \nabla u^{q+1},\nabla\psi_{\eta
}\right\rangle _{0}+Tu^{q+1}T\psi_{\eta}\right]  d\sigma\\
&  =-\frac{1}{q+1}\left(  q-\frac{m+2}{m}\right)  \Lambda_{q}\int
_{\mathbb{S}^{2m+1}}u^{q+1}\left(  \Delta\psi_{\eta}+T^{2}\psi_{\eta}\right)
d\sigma\\
&  =\frac{2\left(  m+1\right)  }{q+1}\left(  q-\frac{m+2}{m}\right)
\Lambda_{q}\int_{\mathbb{S}^{2m+1}}u^{q+1}\psi_{\eta}d\sigma.
\end{align*}
Therefore $\int_{\mathbb{S}^{2m+1}}u^{q+1}\psi_{\eta}d\sigma=0$ for all
$\eta\in\mathbb{S}^{2m+1}$. This yields the desired conclusion.
\end{proof}

By calculating the 2nd variation of the functional at the minimizer $u_{q}$,
we have for any $f$ with $\int_{\mathbb{S}^{2m+1}}u_{q}^{q}fd\sigma=0$%
\[
\int_{\mathbb{S}^{2m+1}}\left(  \left\vert \nabla_{b}f\right\vert ^{2}%
+\frac{m^{2}}{4}f^{2}\right)  d\sigma\geq q\int_{\mathbb{S}^{2m+1}}\left(
\left\vert \nabla_{b}u_{q}\right\vert ^{2}+\frac{m^{2}}{4}u_{q}^{2}\right)
d\sigma\int_{\mathbb{S}^{2m+1}}u_{q}^{q-1}f^{2}d\sigma.
\]
By Corollary \ref{mzero}, we can take $f\left(  \xi\right)  =u_{q}\left(
\xi\right)  x_{i}$ or $u_{q}\left(  \xi\right)  y_{i}$, where $x_{i}%
=\operatorname{Re}\xi_{i},y_{i}=\operatorname{Im}\xi_{i}$. Therefore, summing
the corresponding inequalities for all such $f$'s, we obtain, in view of
$\left\Vert u_{q}\right\Vert _{q+1}=1$
\begin{align*}
q\int_{\mathbb{S}^{2m+1}}\left(  \left\vert \nabla_{b}u_{q}\right\vert
^{2}+\frac{m^{2}}{4}u_{q}^{2}\right)  d\sigma &  \leq\int_{\mathbb{S}^{2m+1}%
}\left[  \sum_{i}\left\vert \nabla_{b}\left(  u_{q}x_{i}\right)  \right\vert
^{2}+\left\vert \nabla_{b}\left(  u_{q}y_{i}\right)  \right\vert ^{2}%
+\frac{m^{2}}{4}u_{q}^{2}\right]  d\sigma\\
&  =\int_{\mathbb{S}^{2m+1}}\left[  \left\vert \nabla_{b}u_{q}\right\vert
^{2}+u_{q}^{2}\left(  \frac{m^{2}}{4}-\sum_{i}\left(  x_{i}\Delta_{b}%
x_{i}+y_{i}\Delta_{b}y_{i}\right)  \right)  \right]  d\sigma\\
&  =\int_{\mathbb{S}^{2m+1}}\left[  \left\vert \nabla_{b}u_{q}\right\vert
^{2}+\frac{m\left(  m+2\right)  }{4}u_{q}^{2}\right]  d\sigma.
\end{align*}
Therefore
\[
\left(  q-1\right)  \int_{\mathbb{S}^{2m+1}}\left\vert \nabla_{b}%
u_{q}\right\vert ^{2}d\sigma\leq\frac{m^{2}}{4}\left(  \frac{m+2}{m}-q\right)
\int_{\mathbb{S}^{2m+1}}u_{q}^{2}d\sigma.
\]
As $\left\Vert u_{q}\right\Vert _{q+1}=1$, the above inequality implies that
$\int_{\mathbb{S}^{2m+1}}\left\vert \nabla_{b}u_{q}\right\vert ^{2}%
d\sigma\rightarrow0$ as $q\nearrow\frac{m+2}{m}$. It follows that we can
choose a sequence $q_{i}\nearrow\frac{m+2}{m}$ s.t. the sequence $\left\{
u_{i}=u_{q_{i}}\right\}  $ converges to a nonzero constant $c$, i.e.
\[
\int_{\mathbb{S}^{2m+1}}\left\vert \nabla_{b}\left(  u_{i}-c\right)
\right\vert ^{2}d\sigma\rightarrow0,\left\Vert u_{i}-c\right\Vert _{2\left(
m+1\right)  /m}\rightarrow0.
\]
Therefore the constant function $c$ is a minimizer of (\ref{CRY}) and the
inequality (\ref{JLE}) follows.

\section{Preliminaries for the general case}

In this Section, we present some fundamental technical results needed for the
proof of the Frank-Lieb inequality in the general case. We first recall the
Funk-Hecke theorem on $\mathbb{S}^{2m+1}$ (cf. \cite{FL1, BLM} and references
therein for more details). The space $L^{2}\left(  \mathbb{S}^{2m+1}\right)  $
can be decomposed into its $U\left(  m+1\right)  $-irreducible components%
\begin{equation}
L^{2}\left(  \mathbb{S}^{2m+1}\right)  =\oplus_{j,k\geq0}\mathcal{H}_{j,k},
\label{l2d}%
\end{equation}
Here $\mathcal{H}_{j,k}$ is the space of restrictions to $\mathbb{S}^{2m+1}$
of harmonic polynomials $p\left(  z,\overline{z}\right)  $ on $\mathbb{C}%
^{m+1}$ that are homogeneous of degree $j$ in $z$ and degree $k$ in
$\overline{z}$. For an integrable function $K$ on the unit disc in
$\mathbb{C}$ we can define an integral operator with kernel $K\left(  \xi
\cdot\overline{\eta}\right)  $ on $\mathbb{S}^{2m+1}$ by%
\[
\left(  If\right)  \left(  \xi\right)  =\int_{\mathbb{S}^{2m+1}}K\left(
\xi\cdot\overline{\eta}\right)  f\left(  \eta\right)  d\sigma\left(
\eta\right)  .
\]
The Funk-Hecke theorem states that such operators are diagonal with respect to
the decomposition (\ref{l2d}). We need the following explicit results.

\begin{proposition}
\label{weights}(Corollary 5.3 in \cite{FL1}) Let $-1<\alpha<\left(
m+1\right)  /2$.

\begin{enumerate}
\item The eigenvalue of the operator with kernel $\left\vert 1-\xi
\cdot\overline{\eta}\right\vert ^{-2\alpha}$ on the subspace $\mathcal{H}%
_{j,k}$ is
\[
E_{j,k}=\frac{2\pi^{m+1}\Gamma\left(  m+1-2\alpha\right)  }{\Gamma^{2}\left(
\alpha\right)  }\frac{\Gamma\left(  j+\alpha\right)  }{\Gamma\left(
j+m+1-\alpha\right)  }\frac{\Gamma\left(  k+\alpha\right)  }{\Gamma\left(
k+m+1-\alpha\right)  }%
\]

\item The eigenvalue of the operator with kernel $\left\vert \xi\cdot
\overline{\eta}\right\vert ^{2}\left\vert 1-\xi\cdot\overline{\eta}\right\vert
^{-2\alpha}$ on the subspace $\mathcal{H}_{j,k}$ is%
\[
E_{j,k}\left(  1-\frac{\left(  \alpha-1\right)  \left(  m+1-2\alpha\right)
\left(  2jk+n\left(  j+k-1+\alpha\right)  \right)  }{\left(  j-1+\alpha
\right)  \left(  j+m+1-\alpha\right)  \left(  k-1+\alpha\right)  \left(
k+m+1-\alpha\right)  }\right)
\]

\end{enumerate}

When $\alpha=0$ or $1$, the formulas are to be understood by taking limits
with fixed $j$ and $k$.
\end{proposition}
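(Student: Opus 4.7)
The plan is to apply the Funk--Hecke theorem on $\mathbb{S}^{2m+1}$ in the form already introduced: since both kernels $|1-\xi\cdot\overline{\eta}|^{-2\alpha}$ and $|\xi\cdot\overline{\eta}|^{2}|1-\xi\cdot\overline{\eta}|^{-2\alpha}$ depend only on the Hermitian inner product $\xi\cdot\overline{\eta}$, the associated integral operators are $U(m+1)$-equivariant and must act by scalars on each irreducible component $\mathcal{H}_{j,k}$. To extract these scalars it suffices to evaluate the operator on a single zonal representative at the point $\xi=e_{1}$; a convenient choice is the disk polynomial $D_{j,k}(\eta_{1},\overline{\eta_{1}})\in\mathcal{H}_{j,k}$, whose restriction to the circle $|\eta_{1}|=1$ is (up to normalization) a Jacobi polynomial in $\cos\theta$.

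The analytic engine is the parametrization of $\eta\in\mathbb{S}^{2m+1}$ by $\eta_{1}=re^{i\theta}$ with $0\le r\le 1$ and $\eta'\in\sqrt{1-r^{2}}\,\mathbb{S}^{2m-1}\subset\mathbb{C}^{m}$, giving the measure $d\sigma(\eta)=r(1-r^{2})^{m-1}\,dr\,d\theta\,d\sigma_{\mathbb{S}^{2m-1}}$. For part (1), the $\eta'$-integration contributes only the normalization constant $\mathrm{vol}(\mathbb{S}^{2m-1})$ by $U(m)$-invariance; the $\theta$-integral of $e^{i(k-j)\theta}|1-re^{i\theta}|^{-2\alpha}$ produces a Gauss hypergeometric ${}_{2}F_{1}$ factor in $r^{2}$; and the remaining Beta integral in $r^{2}\in[0,1]$, after the standard Pochhammer--Gamma cancellations, assembles into the ratio $\Gamma(j+\alpha)\Gamma(k+\alpha)/[\Gamma(j+m+1-\alpha)\Gamma(k+m+1-\alpha)]$ with the prefactor $2\pi^{m+1}\Gamma(m+1-2\alpha)/\Gamma(\alpha)^{2}$ emerging from the $\theta$-normalization and the Euler reflection identity.

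For part (2), the reduction is identical except that the radial integrand carries an additional factor $|\eta_{1}|^{2}=r^{2}$. This shifts the parameters of the $r$-Beta integral by one unit, so the new eigenvalue equals $E_{j,k}$ times a rational function in $j,k,m,\alpha$ built from ratios $\Gamma(\cdot+1)/\Gamma(\cdot)$. The main obstacle is not analytic but algebraic: one must manipulate this rational function into the compact correction factor $1-(\alpha-1)(m+1-2\alpha)(2jk+n(j+k-1+\alpha))/[(j-1+\alpha)(j+m+1-\alpha)(k-1+\alpha)(k+m+1-\alpha)]$ stated in the Proposition. I would treat this as a polynomial identity in $(j,k,\alpha)$ and verify it by comparison of degrees in $j$ and $k$, by checking the boundary cases $j=0$ or $k=0$ (where several terms vanish), and by confirming the limits $\alpha\to 0$ and $\alpha\to 1$ in accordance with the limiting convention specified in the statement. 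Once that identity is established, both eigenvalue formulas are proved.
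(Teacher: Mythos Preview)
The paper does not actually prove this proposition: it is stated with the attribution ``(Corollary 5.3 in \cite{FL1})'' and then used as a black box, so there is no in-paper proof to compare your proposal against. Your outline is essentially the Funk--Hecke computation that \cite{FL1} carries out, and the ingredients you name (zonal disk polynomials, the $(r,\theta)$-parametrization of $\eta_{1}$, reduction to a Gauss hypergeometric/Beta integral) are exactly the ones used there. In that sense your approach matches the original source rather than the present paper, which simply quotes the result.

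One remark on part~(2): your description ``the radial integrand carries an additional factor $r^{2}$, which shifts the Beta parameters by one unit'' is correct in spirit but hides a nontrivial step. The extra $r^{2}$ multiplies an integrand that already contains a ${}_{2}F_{1}$ in $r^{2}$, so the resulting $r$-integral is not a single Beta integral but a linear combination, and the clean rational correction factor comes from contiguous relations for ${}_{2}F_{1}$ (equivalently, from writing $|\xi\cdot\overline{\eta}|^{2}$ as a short linear combination of $1$, $1-\xi\cdot\overline{\eta}$, $1-\overline{\xi}\cdot\eta$, and $|1-\xi\cdot\overline{\eta}|^{2}$ and using part~(1) at shifted parameters). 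Your plan to verify the final rational identity by degree count and boundary checks is fine, but you should expect this intermediate step rather than a one-line parameter shift.
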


\begin{remark}
\label{pos}It is clear that $E_{j,k}>0$ for all $j$ and $k$ if $\alpha
=\lambda/4$ with $\lambda\in\left(  0,Q\right)  $. Therefore we draw the
following important corollary: the operator $I_{\lambda}$ is positive in the
sense that%
\[
\left\langle I_{\lambda}f,f\right\rangle =\int_{\mathbb{S}^{2m+1}%
\times\mathbb{S}^{2m+1}}\frac{\overline{f(\xi)}f\left(  \eta\right)
}{\left\vert 1-\xi\cdot\overline{\eta}\right\vert ^{\lambda/2}}d\sigma\left(
\xi\right)  d\sigma\left(  \eta\right)  \geq0.
\]

\end{remark}

From Proposition \ref{weights} Frank and Lieb deduced the following inequality
which plays a crucial role.

\begin{theorem}
\label{r2v}Let $0<\lambda<Q=2\left(  m+1\right)  $, then there exist $C>0$
s.t. for any $f$ on $\mathbb{S}^{2m+1}$ one has%
\begin{align*}
&  \int_{\mathbb{S}^{2m+1}}\frac{\overline{f\left(  \xi\right)  }f\left(
\eta\right)  \operatorname{Re}\xi\cdot\overline{\eta}}{\left\vert 1-\xi
\cdot\overline{\eta}\right\vert ^{\lambda/2}}d\sigma\left(  \eta\right)
d\sigma\left(  \xi\right) \\
&  \geq\frac{\lambda}{4\left(  m+1\right)  -\lambda}\left\langle I_{\lambda
}f,f\right\rangle +\frac{C\left(  2\left(  m+1\right)  -\lambda\right)
}{4\left(  m+1\right)  -\lambda}\left\langle I_{\lambda}\left(  f-a_{f}%
\right)  ,f-a_{f}\right\rangle ,
\end{align*}
where $a_{f}=\frac{1}{\left\vert \mathbb{S}^{2m+1}\right\vert }\int
_{\mathbb{S}^{2m+1}}f\left(  \eta\right)  d\sigma\left(  \eta\right)  $ is the
average of $f$.
\end{theorem}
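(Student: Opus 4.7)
The plan is to apply the Funk--Hecke theorem to diagonalize the operator $J$ with kernel $\mathrm{Re}(\xi\cdot\overline{\eta})|1-\xi\cdot\overline{\eta}|^{-\lambda/2}$ against the decomposition (\ref{l2d}), and reduce the claim to a family of scalar inequalities on each $U(m+1)$-irreducible subspace $\mathcal{H}_{j,k}$. Since the kernel depends only on $\xi\cdot\overline{\eta}$, the operator $J$ acts diagonally; let $J_{j,k}$ and $E_{j,k}$ denote the eigenvalues of $J$ and $I_{\lambda}$ on $\mathcal{H}_{j,k}$, the latter given by Proposition~\ref{weights}(1) with $\alpha=\lambda/4$. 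Expanding $f=\sum f_{j,k}$ orthogonally in (\ref{l2d}) and observing that $f-a_{f}$ is just the $\mathcal{H}_{0,0}$-deleted component of $f$, the theorem becomes the scalar comparison
\begin{equation*}
J_{j,k}\geq\frac{\lambda}{4(m+1)-\lambda}E_{j,k}\quad\text{for all }(j,k),
\end{equation*}
with equality at $(j,k)=(0,0)$ and with a strict improvement of at least $\frac{C(Q-\lambda)}{4(m+1)-\lambda}E_{j,k}$ for every other pair.

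To express $J_{j,k}$ in terms of the eigenvalues supplied by Proposition~\ref{weights}, I would use the algebraic identity
\begin{equation*}
|1-\xi\cdot\overline{\eta}|^{2}=1-2\mathrm{Re}(\xi\cdot\overline{\eta})+|\xi\cdot\overline{\eta}|^{2}
\end{equation*}
to rewrite
\begin{equation*}
2\mathrm{Re}(\xi\cdot\overline{\eta})|1-\xi\cdot\overline{\eta}|^{-\lambda/2}=|1-\xi\cdot\overline{\eta}|^{-\lambda/2}+|\xi\cdot\overline{\eta}|^{2}|1-\xi\cdot\overline{\eta}|^{-\lambda/2}-|1-\xi\cdot\overline{\eta}|^{-(\lambda-4)/2},
\end{equation*}
so that $2J=I_{\lambda}+K-I_{\lambda-4}$. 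The eigenvalues of $K$ come from Proposition~\ref{weights}(2) with $\alpha=\lambda/4$, and those of $I_{\lambda-4}$ from Proposition~\ref{weights}(1) with $\alpha^{\prime}=\alpha-1\in(-1,(m-1)/2)$, a range valid for every $\lambda\in(0,Q)$. Using $\Gamma(z+1)=z\Gamma(z)$ to simplify the Gamma-function ratios, $2J_{j,k}/E_{j,k}$ becomes an explicit rational function of $(j,k,\alpha,m)$, and the identity $J_{0,0}/E_{0,0}=\lambda/(4(m+1)-\lambda)$---which reflects the sharp equality case on constants---is a direct verification.

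The main obstacle is the algebraic claim that
\begin{equation*}
\frac{2J_{j,k}}{E_{j,k}}-\frac{2\lambda}{4(m+1)-\lambda}=(Q-\lambda)\cdot\frac{P(j,k,\alpha,m)}{D(j,k,\alpha,m)},
\end{equation*}
with $D>0$ and $P\geq 0$, where $P$ vanishes precisely at $(j,k)=(0,0)$ and moreover $P/D$ admits a uniform positive lower bound over $(j,k)\neq(0,0)$ and $\alpha\in(0,(m+1)/2)$. This factorization is the spectral heart of Frank--Lieb's argument and yields the constant $C$; once it is in place, reassembling across the decomposition (\ref{l2d}) delivers the stated operator inequality.
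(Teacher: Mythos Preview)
Your proposal is correct and follows essentially the same route as the paper, which does not give an independent proof but simply points to Theorem~5.1 of \cite{FL1} and remarks that the extra term with $C>0$ comes from inspecting that argument. What you sketch---diagonalizing via Funk--Hecke, using the identity $2\operatorname{Re}(\xi\cdot\overline{\eta})=1+|\xi\cdot\overline{\eta}|^{2}-|1-\xi\cdot\overline{\eta}|^{2}$ to write $2J=I_{\lambda}+K-I_{\lambda-4}$, and then verifying the scalar inequality $J_{j,k}/E_{j,k}\geq \lambda/(2Q-\lambda)$ with a uniform positive gap off $(0,0)$---is exactly the Frank--Lieb computation, so your outline and the paper's citation amount to the same proof; the only part you leave unfinished is the explicit algebraic verification of the gap, which is precisely the ``inspection'' the paper also defers to \cite{FL1}.
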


\begin{remark}
Taking $C=0$, this is precisely Theorem 5.1 in \cite{FL1}. By inspecting their
proof, it is easy to get the above strengthened version.
\end{remark}

\begin{proposition}
\label{itr}If $f_{t,\eta}=f\circ\Phi_{t,\eta}\phi_{t,\eta}^{\left(
m+1\right)  /p}$, then%
\[
I_{\lambda}\left(  f_{t,\eta}\right)  =I_{\lambda}\left(  f\phi_{t,-\eta
}^{\left(  m+1\right)  /p^{\prime}-\lambda/4}\right)  \circ\Phi_{t,\eta}%
\phi_{t,\eta}^{\lambda/4}.
\]

\end{proposition}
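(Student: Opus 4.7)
The plan is a direct change of variables in the integral defining $I_{\lambda}(f_{t,\eta})$, based on three standard facts about the CR automorphisms $\Phi_{t,\eta}$ of $\mathbb{S}^{2m+1}$ (all recorded in the appendices of \cite{FL1}): the conformal kernel identity
\[
|1-\Phi_{t,\eta}(\xi)\cdot\overline{\Phi_{t,\eta}(\zeta)}| = \phi_{t,\eta}^{1/2}(\xi)\,\phi_{t,\eta}^{1/2}(\zeta)\,|1-\xi\cdot\overline{\zeta}|,
\]
the inverse relation $\Phi_{t,\eta}^{-1}=\Phi_{t,-\eta}$ together with the cocycle identity $\phi_{t,-\eta}\circ\Phi_{t,\eta} = \phi_{t,\eta}^{-1}$, and the transformation of measure $\Phi_{t,\eta}^{\ast}d\sigma = \phi_{t,\eta}^{m+1}d\sigma$.

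Starting from
\[
I_{\lambda}(f_{t,\eta})(\xi) = \int_{\mathbb{S}^{2m+1}}\frac{f(\Phi_{t,\eta}(\zeta))\,\phi_{t,\eta}^{(m+1)/p}(\zeta)}{|1-\xi\cdot\overline{\zeta}|^{\lambda/2}}\,d\sigma(\zeta),
\]
I would substitute $\omega = \Phi_{t,\eta}(\zeta)$. The measure gives $d\sigma(\zeta) = \phi_{t,-\eta}^{m+1}(\omega)\,d\sigma(\omega)$, and the weight in $f_{t,\eta}$ becomes $\phi_{t,\eta}^{(m+1)/p}(\zeta) = \phi_{t,-\eta}^{-(m+1)/p}(\omega)$ by the cocycle identity. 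For the kernel, note $\zeta = \Phi_{t,-\eta}(\omega)$ and apply the conformal identity with $\eta$ replaced by $-\eta$ to the pair $(\Phi_{t,\eta}(\xi),\omega)$; using that $\phi_{t,-\eta}(\Phi_{t,\eta}(\xi)) = \phi_{t,\eta}(\xi)^{-1}$, this yields
\[
|1-\xi\cdot\overline{\zeta}|^{\lambda/2} = \phi_{t,\eta}^{-\lambda/4}(\xi)\,\phi_{t,-\eta}^{\lambda/4}(\omega)\,|1-\Phi_{t,\eta}(\xi)\cdot\overline{\omega}|^{\lambda/2}.
\]

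Collecting factors, $\phi_{t,\eta}^{\lambda/4}(\xi)$ pulls out of the integral, and the exponents of $\phi_{t,-\eta}(\omega)$ add up to $(m+1)-(m+1)/p-\lambda/4 = (m+1)/p'-\lambda/4$, so the remaining integral is exactly $I_{\lambda}(f\phi_{t,-\eta}^{(m+1)/p'-\lambda/4})(\Phi_{t,\eta}(\xi))$, giving the stated identity. The computation is essentially bookkeeping; the only step requiring care is invoking the conformal kernel identity at the shifted point $\Phi_{t,\eta}(\xi)$ (rather than at $\xi$ itself), so that the outside factor comes out as $\phi_{t,\eta}^{\lambda/4}(\xi)$ and the three sources of $\phi_{t,-\eta}(\omega)$ collapse to the single exponent $(m+1)/p'-\lambda/4$.
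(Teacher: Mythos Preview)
Your proof is correct and follows essentially the same route as the paper's: a direct change of variables $\omega=\Phi_{t,\eta}(\zeta)$ in the defining integral, combined with the conformal kernel identity (the paper's equation (\ref{fid})), the relation $\phi_{t,\eta}\circ\Phi_{t,\eta}^{-1}=\phi_{t,-\eta}^{-1}$, and the Jacobian $\phi_{t,\eta}^{m+1}$. The only cosmetic differences are that you name the new variable $\omega$ (the paper reuses $\zeta$) and you invoke the cocycle identity up front rather than after the substitution; the underlying computation is identical.
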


\begin{proof}
By direction calculation, the following identity holds%
\begin{equation}
\left\vert 1-\Phi_{t,\eta}\left(  \xi\right)  \cdot\overline{\Phi_{t,\eta
}\left(  \zeta\right)  }\right\vert ^{2}=\left\vert 1-\xi\cdot\overline{\zeta
}\right\vert ^{2}\phi_{t,\eta}\left(  \xi\right)  \phi_{t,\eta}\left(
\zeta\right)  . \label{fid}%
\end{equation}

We compute by a change of variables%
\begin{align*}
I_{\lambda}\left(  f_{t,\eta}\right)  \left(  \xi\right)   &  =\int
_{\mathbb{S}^{2m+1}}\frac{f\circ\Phi_{t,\eta}\left(  \zeta\right)
\phi_{t,\eta}^{\left(  m+1\right)  /p}\left(  \zeta\right)  }{\left\vert
1-\xi\cdot\overline{\zeta}\right\vert ^{\lambda/2}}d\sigma\left(  \zeta\right)
\\
&  =\int_{\mathbb{S}^{2m+1}}\frac{f\left(  \zeta\right)  \left[  \phi_{t,\eta
}\circ\Phi_{t,\eta}^{-1}\left(  \zeta\right)  \right]  ^{-\left(  m+1\right)
/p^{\prime}}}{\left\vert 1-\xi\cdot\overline{\Phi_{t,\eta}^{-1}\left(
\zeta\right)  }\right\vert ^{\lambda/2}}d\sigma\left(  \zeta\right)  .
\end{align*}
It is easy to see that $\phi_{t,\eta}\circ\Phi_{t,\eta}^{-1}\left(
\zeta\right)  =1/\phi_{t,-\eta}\left(  \zeta\right)  $ while using (\ref{fid})
we have%
\begin{align*}
\left\vert 1-\xi\cdot\overline{\Phi_{t,\eta}^{-1}\left(  \zeta\right)
}\right\vert ^{2}  &  =\left\vert 1-\Phi_{t,\zeta}\left(  \xi\right)
\cdot\overline{\zeta}\right\vert ^{2}\phi_{t,-\eta}\circ\Phi_{t,\eta}\left(
\xi\right)  \left(  \xi\right)  \phi_{t,-\eta}\left(  \zeta\right) \\
&  =\left\vert 1-\Phi_{t,\zeta}\left(  \xi\right)  \cdot\overline{\zeta
}\right\vert ^{2}\phi_{t,-\eta}\left(  \zeta\right)  /\phi_{t,\eta}\left(
\xi\right)
\end{align*}
Therefore%
\begin{align*}
I_{\lambda}\left(  f_{t,\eta}\right)  \left(  \xi\right)   &  =\left[
\phi_{t,\eta}\left(  \xi\right)  \right]  ^{\lambda/4}\int_{\mathbb{S}^{2m+1}%
}\frac{f\left(  \zeta\right)  \phi_{t,-\eta}\left(  \zeta\right)  ^{\left(
m+1\right)  /p^{\prime}-\lambda/4}}{\left\vert 1-\Phi_{t,\eta}\left(
\xi\right)  \cdot\overline{\zeta}\right\vert ^{\lambda/2}\left(  \xi\right)
}d\sigma\left(  \zeta\right) \\
&  =I_{\lambda}\left(  f\phi_{t,-\eta}^{\left(  m+1\right)  /p^{\prime
}-\lambda/4}\right)  \circ\Phi_{t,\eta}\left(  \xi\right)  \phi_{t,\eta
}^{\lambda/4}\left(  \xi\right)  .
\end{align*}

\end{proof}

\section{\bigskip Proof of the sharp inequality}

We fix $\lambda\in\left(  0,Q\right)  $. Recall that the operator $I_{\lambda
}$ is defined by
\[
I_{\lambda}f\left(  \xi\right)  =\int_{\mathbb{S}^{2m+1}}\frac{f\left(
\eta\right)  }{\left\vert 1-\xi\cdot\overline{\eta}\right\vert ^{\lambda/2}%
}d\sigma\left(  \eta\right)  .
\]
Given $1<p<\frac{Q}{Q-\lambda}$, set $p^{\ast}=\frac{Qp}{Q-p\left(
Q-\lambda\right)  }>1$. By the work of Folland-Stein \cite{FS}, $I_{\lambda}$
is a bounded operator from $L^{p}\left(  \mathbb{S}^{2m+1}\right)  $ to
$L^{p^{\ast}}\left(  \mathbb{S}^{2m+1}\right)  $. In other words, there exists
a positive constant $C$ s.t. for all $f\in L^{p}\left(  \mathbb{S}%
^{2m+1}\right)  $.
\[
\left\Vert I_{\lambda}f\right\Vert _{p^{\ast}}\leq C\left\Vert f\right\Vert
_{p}.
\]
The contribution of Frank and Lieb \cite{FL1} (Theorem 2) is the determination
of the sharp constant and extremal functions when $p=\frac{2Q}{2Q-\lambda}$
and hence $p^{\ast}=\frac{2Q}{2Q+\lambda}$. Indeed, it is easy to verify that
the sharp inequality in Theorem 2 is equivalent to

\begin{theorem}
\label{flv}For $f\in L^{\frac{2Q}{2Q-\lambda}}\left(  \mathbb{S}%
^{2m+1}\right)  $%
\[
\left\Vert I_{\lambda}f\right\Vert _{\frac{2Q}{2Q+\lambda}}\leq\left(
\frac{2\pi^{m+1}}{m!}\right)  ^{\lambda/Q}\frac{m!\Gamma\left(  \left(
Q-\lambda\right)  /2\right)  }{\Gamma^{2}\left(  \left(  2Q-\lambda\right)
/4\right)  }\left\Vert f\right\Vert _{\frac{2Q}{2Q-\lambda}}.
\]

\end{theorem}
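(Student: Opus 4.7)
The plan is to adapt the subcritical approximation scheme of Section 2 to the bilinear functional $\langle I_\lambda f, f\rangle$ itself. Fix $p$ slightly larger than $p_c := 2Q/(2Q-\lambda)$. Since $I_\lambda$ is compact from $L^p$ to $L^{p'}$, the variational problem
\[
\Lambda_p := \sup\bigl\{\langle I_\lambda f, f\rangle : \|f\|_p = 1\bigr\}
\]
admits a nonnegative maximizer $u_p$ satisfying the Euler--Lagrange equation $I_\lambda u_p = \Lambda_p\, u_p^{p-1}$. Because $I_\lambda$ is positive and symmetric (Remark \ref{pos}), it is enough to show that $\Lambda_p$ converges to the sharp constant of Theorem \ref{flv} as $p \searrow p_c$.

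The first substantive step is the moment-zero relation $\int_{\mathbb{S}^{2m+1}} u_p^p\,\xi\,d\sigma = 0$, the analogue of Corollary \ref{mzero}. Combining Proposition \ref{itr} with the conformal change-of-variables formula $\int (h\circ\Phi_{t,\eta})\,\phi_{t,\eta}^{m+1}\,d\sigma = \int h\,d\sigma$ and the identity $\phi_{t,\eta}\circ\Phi_{t,\eta}^{-1} = \phi_{t,-\eta}^{-1}$ used in the proof of that proposition should yield
\[
\langle I_\lambda f_{t,\eta}, f_{t,\eta}\rangle = \bigl\langle I_\lambda(f\,\phi_{t,-\eta}^{\alpha}),\, f\,\phi_{t,-\eta}^{\alpha}\bigr\rangle, \qquad \alpha := \tfrac{m+1}{p'} - \tfrac{\lambda}{4},
\]
while $\|f_{t,\eta}\|_p = \|f\|_p$ for every $t$. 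At $p = p_c$ one has $\alpha = 0$, which recovers the CR invariance of the critical functional; for subcritical $p > p_c$ we have $\alpha > 0$, but since $t\mapsto(u_p)_{t,\eta}$ still lies in the unit $L^p$ sphere and $u_p$ is the maximizer, $\frac{d}{dt}|_{t=0}\langle I_\lambda(u_p)_{t,\eta}, (u_p)_{t,\eta}\rangle = 0$. Expanding this derivative using $\frac{d}{dt}|_{t=0}\phi_{t,-\eta} = 2\psi_\eta$ and the Euler--Lagrange equation collapses it to $4\alpha\,\Lambda_p \int u_p^p \psi_\eta\,d\sigma$; since $\alpha\neq 0$, the desired moment condition follows.

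Next, the constrained second variation at $u_p$ gives, for every $f$ with $\int u_p^{p-1} f\,d\sigma = 0$,
\[
\langle I_\lambda f, f\rangle \leq (p-1)\,\Lambda_p \int_{\mathbb{S}^{2m+1}} u_p^{p-2} f^2\,d\sigma.
\]
The moment-zero condition makes $f = u_p x_i$ and $f = u_p y_i$ admissible. Summing over $i$ collapses the right side to $(p-1)\,\Lambda_p$ (using $\sum_i(x_i^2+y_i^2) = 1$ and $\|u_p\|_p = 1$), while the left side becomes the coupling integral
\[
\int_{\mathbb{S}^{2m+1}\times\mathbb{S}^{2m+1}} \frac{u_p(\xi)\,u_p(\eta)\,\operatorname{Re}(\xi\cdot\overline{\eta})}{|1-\xi\cdot\overline{\eta}|^{\lambda/2}}\,d\sigma(\xi)\,d\sigma(\eta).
\]
Bounding this below by Theorem \ref{r2v} and using the arithmetic identity $p_c - 1 = \lambda/(4(m+1)-\lambda)$, so that the two $\Lambda_p$-terms cancel exactly at the critical exponent, the rearrangement reads
\[
\frac{C(2(m+1)-\lambda)}{4(m+1)-\lambda}\,\bigl\langle I_\lambda(u_p - a_{u_p}),\,u_p - a_{u_p}\bigr\rangle \leq (p - p_c)\,\Lambda_p.
\]
Since $\Lambda_p$ is bounded above by the Folland--Stein constant, the left-hand side tends to zero. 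Using $I_\lambda 1 = E_{0,0}$ and $\int(u_p - a_{u_p})\,d\sigma = 0$ to split $\Lambda_p = a_{u_p}^2 E_{0,0}|\mathbb{S}^{2m+1}| + \langle I_\lambda(u_p - a_{u_p}), u_p - a_{u_p}\rangle$, together with the a priori bound $a_{u_p} \leq |\mathbb{S}^{2m+1}|^{-1/p}$ sharpened via $\|u_p\|_p = 1$, passing to the limit and evaluating $E_{0,0}$ by Proposition \ref{weights} at $j=k=0$ produces the explicit sharp constant.

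The main technical hurdle I anticipate is the Kazdan--Warner step in the second paragraph: Proposition \ref{itr} carries two distinct conformal weight factors, $\phi_{t,\eta}^{\lambda/4}$ from the action of $I_\lambda$ and $\phi_{t,-\eta}^{(m+1)/p'-\lambda/4}$ from the off-criticality, and assembling them with the volume change-of-variables factor $\phi_{t,\eta}^{m+1}$ to produce the clean identity displayed above is where careful bookkeeping is most crucial. A secondary concern is showing that $a_{u_p}$ converges to $|\mathbb{S}^{2m+1}|^{-1/p_c}$ rather than merely being bounded; this should follow from combining the constraint $\|u_p\|_p = 1$ with the smallness of $u_p - a_{u_p}$ in the $I_\lambda$-energy and a uniform integrability argument, but the details will need care.
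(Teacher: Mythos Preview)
Your proposal is correct and tracks the paper's subcritical-approximation strategy closely: the moment-zero identity via conformal symmetry breaking (the paper's Lemma~\ref{m0}), the second variation tested against $u x_i$ and $u y_i$, and the appeal to Theorem~\ref{r2v} are exactly the paper's steps. The one substantive difference is your choice of functional: you maximize $\langle I_\lambda f,f\rangle/\|f\|_p^2$, whereas the paper works with $\|I_\lambda f\|_{p'}/\|f\|_p$. Your choice buys two small simplifications. First, the Euler--Lagrange equation $I_\lambda u_p=\Lambda_p u_p^{p-1}$ is immediate, bypassing the paper's system $v^{p-1}=I_\lambda u$, $u^{p-1}=I_\lambda v$ and the positivity argument forcing $u=v$. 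Second, your second variation already gives the bilinear inequality $\langle I_\lambda f,f\rangle\le(p-1)\Lambda_p\int u_p^{p-2}f^2$, so you avoid the H\"older step the paper inserts between its inequality~(\ref{2ndv}) and this form. Your conformal identity $\langle I_\lambda f_{t,\eta},f_{t,\eta}\rangle=\langle I_\lambda(f\phi_{t,-\eta}^\alpha),f\phi_{t,-\eta}^\alpha\rangle$ is indeed the right bookkeeping (the exponent $\lambda/4+(m+1)/p$ combines with the Jacobian $\phi_{t,\eta}^{m+1}$ and the relation $\phi_{t,\eta}^{-1}=\phi_{t,-\eta}\circ\Phi_{t,\eta}$ to leave exactly $\phi_{t,-\eta}^\alpha$). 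Your endgame is also slightly different: the paper extracts a weak-$*$ limit and upgrades to strong $L^{p_c}$-convergence of $u_p$ to a constant, while you split $\Lambda_p$ orthogonally. Your secondary concern about convergence of $a_{u_p}$ is in fact unnecessary: the H\"older bound $a_{u_p}\le|\mathbb{S}^{2m+1}|^{-1/p}$ already gives $\limsup\Lambda_p\le E_{0,0}|\mathbb{S}^{2m+1}|^{1-2/p_c}$, and testing with a constant gives the matching lower bound; the remaining step $\Lambda_{p_c}=\lim\Lambda_p$ follows by approximating $L^{p_c}$ functions by bounded ones, a point both you and the paper leave implicit.
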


To present our proof of the above sharp inequality, we start with the following

\begin{proposition}
\label{compact}\bigskip For $1<p<\frac{Q}{Q-\lambda},1<q<p^{\ast}=\frac
{Qp}{Q-p\left(  Q-\lambda\right)  }$, the operator $I_{\lambda}:L^{p}\left(
\mathbb{S}^{2m+1}\right)  \rightarrow L^{q}\left(  \mathbb{S}^{2m+1}\right)  $
is compact.
\end{proposition}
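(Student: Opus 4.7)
The plan is to express $I_\lambda$ as an operator-norm limit of compact operators and invoke the closedness of $\mathcal{K}(L^p, L^q)$ inside $\mathcal{B}(L^p, L^q)$. Fix a smooth cutoff $\phi : [0, \infty) \to [0, 1]$ with $\phi \equiv 0$ on $[0, 1/2]$ and $\phi \equiv 1$ on $[1, \infty)$, and for $\delta > 0$ let $I_\lambda^\delta$ denote the integral operator with kernel
$$K^\delta(\xi, \eta) = \phi\!\left(\frac{|1-\xi\cdot\overline{\eta}|}{\delta}\right) |1-\xi\cdot\overline{\eta}|^{-\lambda/2}.$$
Since $K^\delta$ is bounded and continuous on $\mathbb{S}^{2m+1} \times \mathbb{S}^{2m+1}$, H\"older's inequality combined with the uniform continuity of $K^\delta$ shows that $I_\lambda^\delta$ sends bounded subsets of $L^p$ into uniformly bounded and equicontinuous families in $C(\mathbb{S}^{2m+1})$, which by Arzel\`a--Ascoli are precompact in $C(\mathbb{S}^{2m+1}) \subset L^q$. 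Hence each $I_\lambda^\delta$ is compact.

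The main step is to show $\|I_\lambda - I_\lambda^\delta\|_{L^p \to L^q} \to 0$ as $\delta \to 0$. The residual kernel is supported in $\{|1-\xi\cdot\overline{\eta}| < \delta\}$ and pointwise dominated by $|1-\xi\cdot\overline{\eta}|^{-\lambda/2}$. Using the strict inequality $q < p^\ast$ together with continuous dependence of this exponent on $\lambda$, I would pick $\lambda' \in (\lambda, Q)$ slightly larger than $\lambda$, so close that the corresponding Folland--Stein exponent $\widetilde{p} := Qp/(Q - p(Q-\lambda'))$ still satisfies $\widetilde{p} > q$. On the support of the residual kernel,
$$|1-\xi\cdot\overline{\eta}|^{-\lambda/2} = |1-\xi\cdot\overline{\eta}|^{(\lambda'-\lambda)/2}\, |1-\xi\cdot\overline{\eta}|^{-\lambda'/2} \leq \delta^{(\lambda'-\lambda)/2}\, |1-\xi\cdot\overline{\eta}|^{-\lambda'/2}.$$
Applying the Folland--Stein bound $\|I_{\lambda'} g\|_{\widetilde p} \leq C \|g\|_p$ to the residual operator yields
$$\|(I_\lambda - I_\lambda^\delta) f\|_{\widetilde{p}} \leq C\, \delta^{(\lambda'-\lambda)/2}\, \|f\|_p,$$
after which the continuous embedding $L^{\widetilde{p}} \hookrightarrow L^q$ on the finite-measure space $\mathbb{S}^{2m+1}$ (valid since $\widetilde{p} > q$) gives
$$\|(I_\lambda - I_\lambda^\delta) f\|_q \leq C'\, \delta^{(\lambda'-\lambda)/2}\, \|f\|_p,$$
driving the operator norm to zero as $\delta \to 0$.

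The argument is essentially routine; the only step requiring any care is the quantitative control of the residual operator, where the trick is to trade a small power of $\delta$ against a slightly stronger Folland--Stein inequality with parameter $\lambda' > \lambda$. The strict inequality $q < p^\ast$ is precisely what provides the room for this comparison to succeed.
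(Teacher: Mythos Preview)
Your argument is correct. Both the paper and you approximate $I_\lambda$ in operator norm by integral operators with bounded continuous kernels (which are compact), but the residual is controlled differently. The paper first interpolates via $\|I_\lambda f\|_q \leq \|I_\lambda f\|_p^\theta \|I_\lambda f\|_{p^\ast}^{1-\theta}$ to reduce matters to compactness of $I_\lambda:L^p\to L^p$, and then kills the truncation error using the Schur test, i.e.\ the bound $\|I_K\|_{L^p\to L^p}\le \sup_\xi\!\int|K(\xi,\eta)|\,d\sigma(\eta)$, together with local integrability of $|1-\xi\cdot\overline{\eta}|^{-\lambda/2}$ on $\mathbb{S}^{2m+1}$. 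You instead stay in the $L^p\to L^q$ setting and exploit the strict gap $q<p^\ast$ directly: perturbing to $\lambda'>\lambda$ gives a pointwise factor $\delta^{(\lambda'-\lambda)/2}$ on the residual kernel, and the Folland--Stein bound for $I_{\lambda'}$ combined with $L^{\widetilde p}\hookrightarrow L^q$ finishes. The paper's route is slightly more elementary in that the Schur test needs nothing beyond integrability of the kernel, whereas your route invokes Folland--Stein a second time; on the other hand your argument handles the full range $1<q<p^\ast$ in one stroke and makes transparent exactly where the hypothesis $q<p^\ast$ enters.
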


This result is more or less standard. We outline the proof. Write $q=\theta
p+\left(  1-\theta\right)  p^{\ast}$ with $\theta\in\left(  0,1\right)  $. By
the Holder inequality, we have
\[
\left\Vert I_{\lambda}f\right\Vert _{q}\leq\left\Vert I_{\lambda}f\right\Vert
_{p}^{\theta}\left\Vert I_{\lambda}f\right\Vert _{p^{\ast}}^{1-\theta}%
\]
Therefore it suffices to prove that $I_{\lambda}$ from $L^{p}\left(
\mathbb{S}^{2m+1}\right)  $ to $L^{p}\left(  \mathbb{S}^{2m+1}\right)  $ is
compact. For any kernel $K\left(  \xi,\eta\right)  $, the following inequality
for the integral operator $I_{K}$ is well known (in a much more general
setting)%
\[
\left\Vert I_{K}f\right\Vert _{p}\leq C_{K}\left\Vert f\right\Vert _{p},
\]
where
\[
C_{K}=\max\left\{  \sup_{\xi}\int_{\mathbb{S}^{2m+1}}\left\vert K\left(
\xi,\eta\right)  \right\vert d\sigma\left(  \eta\right)  ,\sup_{\eta}%
\int_{\mathbb{S}^{2m+1}}\left\vert K\left(  \xi,\eta\right)  \right\vert
d\sigma\left(  \xi\right)  \right\}  .
\]
If $K$ is continuous, it can be approximated uniformly by polynomials in
$\xi,\eta$ by the Stone-Weierstrass theorem. Therefore $I_{K}:L^{p}\left(
\mathbb{S}^{2m+1}\right)  \rightarrow L^{p}\left(  \mathbb{S}^{2m+1}\right)  $
is compact as it can be approximated by operators of finite rank. In our case
$K\left(  \xi,\eta\right)  =\left\vert 1-\xi\cdot\overline{\eta}\right\vert
^{-\lambda/2}$. It is easy to see that it can be approximated by continuous
kernels
\[
K^{\varepsilon}\left(  \xi,\eta\right)  =\left\{
\begin{array}
[c]{cc}%
\varepsilon^{-\lambda/2}, & \text{if }\left\vert 1-\xi\cdot\overline{\eta
}\right\vert \leq\varepsilon,\\
\left\vert 1-\xi\cdot\overline{\eta}\right\vert ^{-\lambda/2} & \text{if
}\left\vert 1-\xi\cdot\overline{\eta}\right\vert \geq\varepsilon.
\end{array}
\right.
\]
Therefore $I_{\lambda}:L^{p}\left(  \mathbb{S}^{2m+1}\right)  \rightarrow
L^{p}\left(  \mathbb{S}^{2m+1}\right)  $ is compact.

\bigskip We now take $p>\frac{2Q}{2Q-\lambda}$. By a simple calculation, its
dual $p^{\prime}:=\frac{p}{p-1}<p^{\ast}$. Therefore the following
minimization problem
\begin{equation}
\Lambda_{p}=\sup\left\{  \left\Vert I_{\lambda}f\right\Vert _{p^{\prime}%
}/\left\Vert f\right\Vert _{p}:f\in L^{q}\left(  \mathbb{S}^{2m+1}\right)
,f\neq0\right\}  \label{mipcr}%
\end{equation}
has a solution $u_{p}$ by Proposition \ref{compact}. To simplify the
presentation, we will drop the subscript $p$ temporarily. We can obviously
assume that $u\geq0$. It satisfies the following Euler-Lagrange equation (when
properly scaled)%
\[
\left\{
\begin{array}
[c]{c}%
v^{p-1}=I_{\lambda}(u)\\
u^{p-1}=I_{\lambda}(v)
\end{array}
\right.  .
\]
Then%
\begin{align*}
\left\langle I_{\lambda}(u-v),u-v\right\rangle  &  =\left\langle
v^{p-1}-u^{p-1},u-v\right\rangle \\
&  =-\int_{\mathbb{S}^{2m+1}}\left(  u^{p-1}-v^{p-1}\right)  \left(
u-v\right)  d\sigma\\
&  \leq0.
\end{align*}
By the positivity of $I_{\lambda}$ (Remark \ref{pos}), we must have $u=v$,
i.e.
\[
I_{\lambda}(u)=u^{p-1}.
\]
Then
\[
\Lambda_{p}=\frac{\left\Vert I_{\lambda}u\right\Vert _{p^{\prime}}}{\left\Vert
u\right\Vert _{p}}=\left\Vert u\right\Vert _{p}^{p-2}%
\]
or $\left\Vert u\right\Vert _{p}=\Lambda_{p}^{1/\left(  p-2\right)  }$. It is
clear that $\lim_{p\rightarrow\frac{2Q}{2Q-\lambda}}\Lambda_{p}^{1/\left(
p-2\right)  }=\Lambda^{-2\left(  Q-\lambda\right)  /\left(  2Q-\lambda\right)
}$.

\begin{lemma}
\label{m0}The function $u$ satisfies%
\[
\int_{\mathbb{S}^{2m+1}}u\left(  \xi\right)  ^{p}\xi d\sigma\left(
\xi\right)  =0.
\]

\end{lemma}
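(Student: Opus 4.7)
\emph{Plan.} I would mimic the Kazdan--Warner-type derivation of Corollary \ref{mzero}, but now exploiting Proposition \ref{itr} — the transformation rule of $I_\lambda$ under CR automorphisms — in place of the conformal invariance of $E$. Since $p>2Q/(2Q-\lambda)$ is subcritical, the ratio $\|I_\lambda f\|_{p'}/\|f\|_p$ is \emph{not} invariant under the family $\Phi_{t,\eta}$; it is precisely this symmetry-breaking that will produce the moment identity.

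For $\eta\in\mathbb{S}^{2m+1}$ I would form the one-parameter deformation $u_{t,\eta}(\xi)=u\circ\Phi_{t,\eta}(\xi)\,\phi_{t,\eta}(\xi)^{(m+1)/p}$. The exponent $(m+1)/p$ is chosen so that a routine change of variables (using $\Phi_{t,\eta}^{\ast}d\sigma=\phi_{t,\eta}^{m+1}d\sigma$) gives $\|u_{t,\eta}\|_p=\|u\|_p$. Next, I apply Proposition \ref{itr} with $\alpha=(m+1)/p'-\lambda/4$ to rewrite $I_\lambda u_{t,\eta}$, then substitute into $\int|\cdot|^{p'}d\sigma$. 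Changing variables back and using $\phi_{t,\eta}\circ\Phi_{t,\eta}^{-1}=1/\phi_{t,-\eta}$, together with the arithmetic identity $m+1-p'\lambda/4=p'\alpha$, should collapse everything to
\[
\|I_\lambda u_{t,\eta}\|_{p'}^{p'}=\int_{\mathbb{S}^{2m+1}}\bigl(\phi_{t,-\eta}^{\alpha}\,I_\lambda(u\,\phi_{t,-\eta}^{\alpha})\bigr)^{p'}d\sigma.
\]
Note $\alpha=0$ at the critical exponent (whence conformal invariance), while in our subcritical range $\alpha>0$.

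Since $u$ attains the supremum in (\ref{mipcr}) and $\|u_{t,\eta}\|_p$ is independent of $t$, the function $t\mapsto\|I_\lambda u_{t,\eta}\|_{p'}^{p'}$ has a maximum at $t=0$, so its derivative vanishes there. Differentiating the displayed formula and using the Euler--Lagrange relation $I_\lambda u=u^{p-1}$ at $t=0$ together with the self-adjointness of $I_\lambda$, the derivative should simplify to
\[
0=2p'\alpha\int_{\mathbb{S}^{2m+1}}u^{p}\,\dot\phi\,d\sigma,\qquad \dot\phi:=\partial_t\phi_{t,-\eta}\big|_{t=0}.
\]
A direct computation from $\phi_{t,-\eta}(\xi)=|\cosh t-(\xi\cdot\overline{\eta})\sinh t|^{-2}$ gives $\dot\phi=2\operatorname{Re}(\xi\cdot\overline{\eta})=2\psi_\eta$. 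Since $\alpha\neq 0$, we conclude $\int u^{p}\psi_\eta\,d\sigma=0$ for every $\eta$, which is equivalent to the vector identity $\int u^{p}\,\xi\,d\sigma=0$.

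The main obstacle is the careful bookkeeping of Jacobian and conformal factors in extracting the clean display for $\|I_\lambda u_{t,\eta}\|_{p'}^{p'}$ from Proposition \ref{itr}; once that is in place, the differentiation, the self-adjointness step, and the calculation of $\dot\phi$ are all routine, and the non-vanishing of $\alpha$ in the subcritical regime is exactly what rescues the identity from being trivial.
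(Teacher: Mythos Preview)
Your proposal is correct and follows essentially the same route as the paper: form $u_{t,\eta}=u\circ\Phi_{t,\eta}\,\phi_{t,\eta}^{(m+1)/p}$, use Proposition \ref{itr} and a change of variables to obtain the formula $\|I_\lambda u_{t,\eta}\|_{p'}^{p'}=\int \phi_{t,-\eta}^{p'\alpha}\bigl(I_\lambda(u\,\phi_{t,-\eta}^{\alpha})\bigr)^{p'}d\sigma$ with $\alpha=(m+1)/p'-\lambda/4$, differentiate at $t=0$, and reduce via $I_\lambda u=u^{p-1}$ and self-adjointness to $2p'\alpha\int u^{p}\dot\phi\,d\sigma=0$. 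The only cosmetic difference is that the paper differentiates $\log(\|I_\lambda u_t\|_{p'}/\|u_t\|_p)$ rather than $\|I_\lambda u_t\|_{p'}^{p'}$ directly, which amounts to the same computation since $\|u_t\|_p$ is constant in $t$.
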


\begin{proof}
We consider for any $\eta\in\mathbb{S}^{2m+1}$ the family $u_{t,\eta}%
=u\circ\Phi_{t,\eta}\phi_{t,\eta}^{\left(  m+1\right)  /p}$. Clearly
$\left\Vert u_{t,\eta}\right\Vert _{p}=\left\Vert u\right\Vert _{p}$. By
Proposition \ref{itr}, we have
\[
I_{\lambda}\left(  u_{t,\eta}\right)  =I_{\lambda}\left(  u\phi_{t,-\eta
}^{\left(  m+1\right)  /p^{\prime}-\lambda/4}\right)  \circ\Phi_{t,\eta}%
\phi_{t,\eta}^{\lambda/4}.
\]
Therefore
\begin{align*}
\left\Vert I_{\lambda}\left(  u_{t}\right)  \right\Vert _{p^{\prime}%
}^{p^{\prime}}  &  =\int_{\mathbb{S}^{2m+1}}\phi_{t,\eta}^{\lambda p^{\prime
}/4}\left(  \xi\right)  \left(  I_{\lambda}\left(  u\phi_{t,-\eta}^{\left(
m+1\right)  /p^{\prime}-\lambda/4}\right)  \circ\Phi_{t,\eta}\left(
\xi\right)  \right)  ^{p^{\prime}}d\sigma\left(  \xi\right) \\
&  =\int_{\mathbb{S}^{2m+1}}\phi_{t,\eta}^{\lambda p^{\prime}/4-m+1}\circ
\Phi_{t,\eta}^{-1}\left(  \xi\right)  \left(  I_{\lambda}\left(
u\phi_{t,-\eta}^{\left(  m+1\right)  /p^{\prime}-\lambda/4}\right)  \left(
\xi\right)  \right)  ^{p^{\prime}}d\sigma\left(  \xi\right) \\
&  =\int_{\mathbb{S}^{2m+1}}\phi_{t,-\eta}^{m+1-\lambda p^{\prime}/4}\left(
\xi\right)  \left(  I_{\lambda}\left(  u\phi_{t,-\eta}^{\left(  m+1\right)
/p^{\prime}-\lambda/4}\right)  \left(  \xi\right)  \right)  ^{p^{\prime}%
}d\sigma\left(  \xi\right)  .
\end{align*}
Differentiating $\log\left(  \left\Vert I_{\lambda}\left(  u_{t}\right)
\right\Vert _{p^{\prime}}/\left\Vert u_{t}\right\Vert _{p}\right)  $ at $t=0$
yields,
\begin{align*}
0  &  =\int_{\mathbb{S}^{2m+1}}\left[  \overset{\cdot}{\left(  m+1-\frac
{\lambda p^{\prime}}{4}\right)  \phi}I_{\lambda}(u)^{p^{\prime}}+p^{\prime
}\left(  \frac{m+1}{p^{\prime}}-\frac{\lambda}{4}\right)  I_{\lambda
}(u)^{p^{\prime}-1}I_{\lambda}(u\overset{\cdot}{\phi})\right]  d\sigma\\
&  =\left(  m+1-\frac{\lambda p^{\prime}}{4}\right)  \int_{\mathbb{S}^{2m+1}%
}\left[  I_{\lambda}(u)^{p^{\prime}}\overset{\cdot}{\phi}+I_{\lambda
}(u)^{p^{\prime}-1}I_{\lambda}(u\overset{\cdot}{\phi})\right]  d\sigma,
\end{align*}
where
\[
\overset{\cdot}{\phi}\left(  \xi\right)  =\frac{d}{dt}|_{t=0}\phi_{t,-\eta
}\left(  \xi\right)  =2\operatorname{Re}\left(  \xi\cdot\overline{\eta
}\right)  .
\]
By the Euler-Lagrange equation, $I_{\lambda}(u)=u^{p-1}$. Therefore we have%
\begin{align*}
0  &  =\int_{\mathbb{S}^{2m+1}}\left[  u^{p}\overset{\cdot}{\phi}+uI_{\lambda
}(u\overset{\cdot}{\phi})\right]  d\sigma\\
&  =\int_{\mathbb{S}^{2m+1}}\left[  u^{p}\overset{\cdot}{\phi}+I_{\lambda
}\left(  u\right)  (u\overset{\cdot}{\phi})\right]  d\sigma\\
&  =2\int_{\mathbb{S}^{2m+1}}u^{p}\overset{\cdot}{\phi}d\sigma,
\end{align*}
i.e. $\int_{\mathbb{S}^{2m+1}}u^{p}\left(  \xi\right)  \operatorname{Re}%
\left(  \xi\cdot\overline{\eta}\right)  d\sigma\left(  \xi\right)  =0$ for all
$\eta\in\mathbb{S}^{2m+1}$. The conclusion follows.
\end{proof}

\bigskip By the 2nd variation, we have for all real $f$ with $\int
_{\mathbb{S}^{n}}u^{p-1}f=0$%
\[
\left(  p^{\prime}-1\right)  \int_{\mathbb{S}^{2m+1}}\left(  I_{\lambda
}u\right)  ^{p^{\prime}-2}\left(  I_{\alpha}f\right)  ^{2}\leq\left(
p-1\right)  ^{2}\int_{\mathbb{S}^{2m+1}}u^{p-2}f^{2}.
\]
Using the Euler-Lagrange equation $I_{\lambda}(u)=u^{p-1}$, this simplifies as%
\begin{equation}
\int_{\mathbb{S}^{2m+1}}u^{2-p}\left(  I_{\lambda}f\right)  ^{2}\leq\left(
p-1\right)  ^{2}\int_{\mathbb{S}^{2m+1}}u^{p-2}f^{2}.\label{2ndv}%
\end{equation}
By the Holder inequality%
\begin{align*}
\int_{\mathbb{S}^{2m+1}}fI_{\lambda}f &  =\int_{\mathbb{S}^{2m+1}}u^{\left(
p-2\right)  /2}fu^{\left(  2-p\right)  /2}I_{\lambda}f\\
&  \leq\left(  \int_{\mathbb{S}^{2m+1}}u^{p-2}f^{2}\right)  ^{1/2}\left(
\int_{\mathbb{S}^{2m+1}}u^{2-p}\left(  I_{\lambda}f\right)  ^{2}\right)
^{1/2}\\
&  \leq\left(  p-1\right)  \int_{\mathbb{S}^{2m+1}}u^{p-2}f^{2}.
\end{align*}
In summary, we have for all real $f$ with $\int_{\mathbb{S}^{n}}u^{p-1}f=0$%
\[
\int_{\mathbb{S}^{2m+1}\times\mathbb{S}^{2m+1}}\frac{f(\xi)f\left(
\eta\right)  }{\left\vert 1-\xi\cdot\overline{\eta}\right\vert ^{\lambda/2}%
}d\sigma\left(  \xi\right)  d\sigma\left(  \eta\right)  \leq\left(
p-1\right)  \int_{\mathbb{S}^{2m+1}}u^{p-2}f^{2}.
\]
By Lemma \ref{m0}, we can take $f\left(  z\right)  =u\left(  z\right)
\operatorname{Re}z_{i}$ or $u\left(  z\right)  \operatorname{Im}%
z_{i},i=1,\cdots,m+1$ and adding these inequalities yields%
\[
\int_{\mathbb{S}^{2m+1}\times\mathbb{S}^{2m+1}}\frac{\operatorname{Re}\xi
\cdot\overline{\eta}}{|\xi-\eta|^{\lambda/2}}u(\xi)u(\eta)d\sigma\left(
\xi\right)  d\sigma\left(  \eta\right)  \leq\left(  p-1\right)  \int
_{\mathbb{S}^{2m+1}}u^{p}.
\]
Combined with Theorem \ref{r2v}, this implies%
\begin{align*}
\left(  p-1\right)  \int_{\mathbb{S}^{2m+1}}u^{p} &  \geq\frac{\lambda
}{4\left(  m+1\right)  -\lambda}\left\langle I_{\lambda}u,u\right\rangle
+\frac{C\left(  2\left(  m+1\right)  -\lambda\right)  }{4\left(  m+1\right)
-\lambda}\left\langle I_{\lambda}\left(  u-a\right)  ,\left(  u-a\right)
\right\rangle \\
&  =\frac{\lambda}{4\left(  m+1\right)  -\lambda}\int_{\mathbb{S}^{2m+1}}%
u^{p}+\frac{C\left(  2\left(  m+1\right)  -\lambda\right)  }{4\left(
m+1\right)  -\lambda}\left\langle I_{\lambda}\left(  u-a\right)  ,\left(
u-a\right)  \right\rangle ,
\end{align*}
where $a$ is the average of $u$. Therefore we have%
\[
\left(  p-\frac{4\left(  m+1\right)  }{4\left(  m+1\right)  -\lambda}\right)
\int_{\mathbb{S}^{2m+1}}u^{p}\geq\frac{C\left(  2\left(  m+1\right)
-\lambda\right)  }{4\left(  m+1\right)  -\lambda}\left\langle I_{\lambda
}\left(  u-a\right)  ,\left(  u-a\right)  \right\rangle .
\]
Note that $\frac{4\left(  m+1\right)  }{4\left(  m+1\right)  -\lambda}%
=\frac{2Q}{2Q-\lambda}$.

By the positivity of $I_{\lambda}$, the RHS\ is nonnegative. As a consequence
(now reattaching the subscript $p$), we have $\left\langle I_{\lambda}\left(
u_{p}-a_{p}\right)  ,\left(  u_{p}-a_{p}\right)  \right\rangle \rightarrow0$
as $p\searrow\frac{2Q}{2Q-\lambda}$. By the Euler-Lagrange equation
$I_{\lambda}(u)=u^{p-1}$again and the observation that $I_{\lambda}$ maps a
constant function to a constant function, this means $\int_{\mathbb{S}^{2m+1}%
}u_{p}^{p-1}\left(  u_{p}-a_{p}\right)  \rightarrow0$ as $p\searrow\frac
{2Q}{2Q-\lambda}$. Thus, as $p\searrow\frac{2Q}{2Q-\lambda}$
\begin{align*}
\int_{\mathbb{S}^{2m+1}}u_{p}^{p} &  =a_{p}\int_{\mathbb{S}^{2m+1}}u_{p}%
^{p-1}+o\left(  1\right)  \\
&  \leq a_{p}\left\vert \mathbb{S}^{2m+1}\right\vert ^{1/p}\left(
\int_{\mathbb{S}^{2m+1}}u_{p}^{p}\right)  ^{\left(  p-1\right)  /p}+o\left(
1\right)  .
\end{align*}
This implies%
\[
\left\Vert u_{p}\right\Vert _{p}\leq a_{p}\left\vert \mathbb{S}^{2m+1}%
\right\vert ^{1/p}+o\left(  1\right)  .
\]
On the other hand, we have $\left\Vert u_{p}\right\Vert _{p}\geq
a_{p}\left\vert \mathbb{S}^{2m+1}\right\vert ^{1/p}$ by the Holder inequality.
Therefore $\lim_{p\searrow\frac{2Q}{2Q-\lambda}}\left\Vert u_{p}\right\Vert
_{p}-a_{p}\left\vert \mathbb{S}^{2m+1}\right\vert ^{1/p}=0$. We assume that
$u_{p}\overset{w^{\ast}}{\longrightarrow}v$ in $L^{\frac{2Q}{2Q-\lambda}%
}(\mathbb{S}^{2m+1})$. Clearly, $\lim_{p\searrow\frac{2Q}{2Q-\lambda}}a_{p}%
=a$, the average of $v$. We have
\begin{align*}
\left\Vert v\right\Vert _{\frac{2Q}{2Q-\lambda}} &  \leq\lim_{p\searrow
\frac{2Q}{2Q-\lambda}}\left\Vert u_{p}\right\Vert _{\frac{2Q}{2Q-\lambda}}\\
&  \leq\lim_{p\searrow\frac{2Q}{2Q-\lambda}}\left\Vert u_{p}\right\Vert
_{p}\left\vert \mathbb{S}^{2m+1}\right\vert ^{\left(  p-\frac{2Q}{2Q-\lambda
}\right)  /p}\\
&  =\lim_{p\searrow\frac{2Q}{2Q-\lambda}}a_{p}\left\vert \mathbb{S}%
^{2m+1}\right\vert ^{1/p}\\
&  =a\left\vert \mathbb{S}^{2m+1}\right\vert ^{\frac{2Q-\lambda}{2Q}}.
\end{align*}
In view of the Holder inequality, we must have $v=a$, i.e. $v$ is constant and
all the inequalities in the formula above are equalities, i.e.
\[
a\left\vert \mathbb{S}^{2m+1}\right\vert ^{\frac{2Q-\lambda}{2Q}}=\left\Vert
v\right\Vert _{\frac{2Q}{2Q-\lambda}}=\lim_{p\searrow\frac{2Q}{2Q-\lambda}%
}\left\Vert u_{p}\right\Vert _{\frac{2Q}{2Q-\lambda}}.
\]
The weak $\ast$-convergence plus the convergence of the norms implies strong
convergence $u_{p}\rightarrow a$ in $L^{\frac{2Q}{2Q-\lambda}}(\mathbb{S}%
^{2m+1})$. Therefore the constant function is a minimizer for
\[
\inf\left\Vert I_{\lambda}f\right\Vert _{\frac{2Q}{2Q+\lambda}}/\left\Vert
f\right\Vert _{\frac{2Q}{2Q-\lambda}}.
\]
Theorem \ref{flv} then follows from a simple calculation.

\section{Further remarks}

In a later paper \cite{FL2}, Frank and Lieb showed that the new method
developed in \cite{FL1} can be adapted to give a new, rearrangement-free proof
of the following sharp Hardy-Littlewood-Sobolev inequality on $\mathbb{R}^{n}$
which was proved originally by Lieb \cite{L} using rearrangement arguments.

\begin{theorem}
Let $0<\lambda<n$ and $p=2n/\left(  2n-\lambda\right)  $. Then for any $f,g\in
L^{p}\left(  \mathbb{S}^{n}\right)  $%
\[
\left\vert \int_{\mathbb{S}^{n}\times\mathbb{S}^{n}}\frac{\overline{f(\xi
)}g\left(  \eta\right)  }{\left\vert \xi-\eta\right\vert ^{\lambda}}%
d\sigma\left(  \xi\right)  d\sigma\left(  \eta\right)  \right\vert \leq
\pi^{\lambda/2}\frac{\Gamma\left(  \left(  n-\lambda\right)  /2\right)
}{\Gamma\left(  n-\lambda/2\right)  }\left(  \frac{\Gamma\left(  n\right)
}{\Gamma\left(  n/2\right)  }\right)  ^{1-\lambda/n}\left\Vert f\right\Vert
_{p}\left\Vert g\right\Vert _{p},
\]
with equality if and only if%
\[
f\left(  \xi\right)  =\frac{c}{\left\vert 1-\xi\cdot a\right\vert ^{\left(
2n-\lambda\right)  /2}},g\left(  \xi\right)  =\frac{c^{\prime}}{\left\vert
1-\xi\cdot a\right\vert ^{\left(  2n-\lambda\right)  /2}}%
\]
for some $c,c^{\prime}\in\mathbb{C}$ and $a\in\mathbb{R}^{n+1}$ with
$\left\vert a\right\vert <1$ (unless $f\equiv0$ or $g\equiv0$).
\end{theorem}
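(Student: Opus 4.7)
The strategy is to transpose the subcritical approximation scheme of Section 4 from the CR sphere to the round sphere $\mathbb{S}^{n}$, replacing the CR kernel $|1-\xi\cdot\overline{\eta}|^{-\lambda/2}$ by the Euclidean Riesz kernel $|\xi-\eta|^{-\lambda}$. The starting point is the $SO(n+1)$-equivariant operator
\[
I_{\lambda}f(\xi)=\int_{\mathbb{S}^{n}}\frac{f(\eta)}{|\xi-\eta|^{\lambda}}\,d\sigma(\eta).
\]
The classical Funk-Hecke formula diagonalizes $I_{\lambda}$ on the spherical harmonic decomposition $L^{2}(\mathbb{S}^{n})=\oplus_{k\ge 0}\mathcal{H}_{k}$ with strictly positive eigenvalues for $0<\lambda<n$, so $I_{\lambda}$ is a positive operator. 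The Schur-plus-Stone-Weierstrass truncation argument used for Proposition \ref{compact} applies verbatim and yields compactness of $I_{\lambda}:L^{p}(\mathbb{S}^{n})\to L^{q}(\mathbb{S}^{n})$ throughout the strictly subcritical range $q<p^{\ast}=np/(n-p(n-\lambda))$. Therefore, for $p>2n/(2n-\lambda)$, the supremum $\Lambda_{p}=\sup\{\|I_{\lambda}f\|_{p'}/\|f\|_{p}\}$ is attained by a nonnegative $u_{p}$, and positivity of $I_{\lambda}$ collapses the Euler-Lagrange system, after normalization, to $I_{\lambda}u_{p}=u_{p}^{p-1}$.

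Next I would install the moment-zero condition via the Möbius covariance of $I_{\lambda}$. The Möbius group of $\mathbb{S}^{n}$ supplies a one-parameter family $\Phi_{t,a}$ with conformal factor $\phi_{t,a}$ satisfying the analogue of identity (3.4),
\[
|\Phi_{t,a}(\xi)-\Phi_{t,a}(\eta)|^{2}=|\xi-\eta|^{2}\phi_{t,a}(\xi)\phi_{t,a}(\eta).
\]
The change-of-variables computation of Proposition \ref{itr} carries over with the appropriate conformal weights, producing a covariance relation for $I_{\lambda}$ analogous to that proposition. Differentiating $\log(\|I_{\lambda}u_{t,a}\|_{p'}/\|u_{t,a}\|_{p})$ at $t=0$ for $u_{t,a}=u_{p}\circ\Phi_{t,a}\,\phi_{t,a}^{n/(2p)}$ and substituting $I_{\lambda}u_{p}=u_{p}^{p-1}$ reduces, exactly as in Lemma \ref{m0}, to the identity $\int_{\mathbb{S}^{n}}u_{p}^{p}(\xi)\,\xi\,d\sigma(\xi)=0$.

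The endgame then mirrors Section 4. The Euler-Lagrange equation and Hölder's inequality give $\int fI_{\lambda}f\le(p-1)\int u_{p}^{p-2}f^{2}$ for every real $f$ with $\int u_{p}^{p-1}f=0$. The moment-zero condition makes the test functions $f_{i}(\xi)=u_{p}(\xi)\xi_{i}$ admissible, and summing over $i=1,\dots,n+1$ (using $\sum_{i}\xi_{i}^{2}=1$) reduces the argument to the spherical spectral-gap inequality: there exists $C>0$ with
\[
\int_{\mathbb{S}^{n}\times\mathbb{S}^{n}}\frac{f(\xi)f(\eta)\,\xi\cdot\eta}{|\xi-\eta|^{\lambda}}\,d\sigma(\xi)d\sigma(\eta)\ge\tfrac{\lambda}{2n-\lambda}\langle I_{\lambda}f,f\rangle+\tfrac{C(n-\lambda)}{2n-\lambda}\langle I_{\lambda}(f-a_{f}),f-a_{f}\rangle.
\]
Granting this, one obtains $(p-\tfrac{2n}{2n-\lambda})\int u_{p}^{p}\ge C'\,\langle I_{\lambda}(u_{p}-a_{p}),u_{p}-a_{p}\rangle$, and the weak-$\ast$/norm-convergence argument concluding Section 4 forces $u_{p}$ to converge in $L^{2n/(2n-\lambda)}$ to a constant, yielding the sharp inequality. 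The main obstacle is the spectral-gap strengthening displayed above: its proof amounts to an explicit Funk-Hecke computation of the eigenvalues of both $I_{\lambda}$ and of the kernel $(\xi\cdot\eta)|\xi-\eta|^{-\lambda}$ on each $\mathcal{H}_{k}$, together with verification of strict inequality on the harmonics of degree $k\ge 2$. Given that Frank and Lieb \cite{FL2} have already obtained the $C=0$ version in the Euclidean setting, the simpler (commutative) spherical geometry should render the spectral check routine.
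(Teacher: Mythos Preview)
Your proposal is correct and follows exactly the approach the paper indicates: the paper's own treatment of this theorem (Section~5) simply states that the subcritical approximation scheme of Section~4 adapts to the round sphere with the Riesz kernel and the minimization problem~(\ref{mir}), and ``the rest of the proof requires minor modifications.'' You have supplied precisely those modifications---compactness, positivity, the moment-zero identity via M\"obius covariance, the second-variation test with coordinate functions, and the spherical analogue of Theorem~\ref{r2v}---so your outline is a faithful expansion of what the paper leaves implicit.
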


Our method can also be adapted to give a simpler proof of Lieb's theorem. In
this case, we work with the operator
\[
I_{\lambda}f\left(  \xi\right)  =\int_{\mathbb{S}^{n}}\frac{f\left(
\eta\right)  }{\left\vert \xi-\eta\right\vert ^{\lambda}}d\sigma\left(
\eta\right)
\]
and consider, for $p>2n/\left(  2n-\lambda\right)  $, the minimization
problem
\begin{equation}
\Lambda_{p}=\sup\left\{  \left\Vert I_{\lambda}f\right\Vert _{p^{\prime}%
}/\left\Vert f\right\Vert _{p}:f\in L^{p}\left(  \mathbb{S}^{n}\right)
,f\neq0\right\}  . \label{mir}%
\end{equation}
The rest of proof requires minor modifications and we omit the details.

We end with some open problems. In the CR cases, it would be interesting to
classify all positive solutions to the Euler-Lagrange equation%
\[
I_{\lambda}(u)=u^{p-1}%
\]
for all $p\geq2Q/\left(  2Q-\lambda\right)  $ on $\mathbb{S}^{2m+1}$, not
merely extremal functions of the corresponding inequality. On $\mathbb{S}^{n}%
$, this kind of classification results can be established by the powerful
method of moving planes or moving spheres (cf. \cite{CLO, Li}). On
$\mathbb{S}^{2m+1}$, the classification was known in the critical case
$p=2Q/\left(  2Q-\lambda\right)  $ only when $\lambda=Q-2$ by the work of
Jerison-Lee \cite{JL2}. The critical case when $\lambda\neq Q-2$ and all the
subcritical cases seem to be largely open on $\mathbb{S}^{2m+1}$ as far as we
know (cf. \cite{W1,W2} for discussions about the significance of such
classification problems).

\end{document}